\documentclass[11pt]{amsart}
\usepackage[T1]{fontenc}
\usepackage[utf8]{inputenc}
\usepackage[margin=1.25 in]{geometry} 

\usepackage{graphicx}
\usepackage{url}
\usepackage[pagebackref=false,colorlinks]{hyperref}
\hypersetup{citecolor={rgb,256:red,0;green,50;blue,100},
linkcolor={rgb,256:red,;green,60;blue,160}}
\usepackage{mathptmx}     
\usepackage{amsmath,amsfonts,amsthm,amssymb,color}
\usepackage{mathtools}
\usepackage{tikz-cd}
\usepackage{quiver}
    \usetikzlibrary{decorations.pathmorphing}
\usepackage{stmaryrd}
\usepackage{bbm}
\usepackage{subcaption}
\usepackage[shortlabels]{enumitem}

\let\saveamalg\amalg
\let\amalg\relax
\usepackage{mathabx}
\let\amalg\saveamalg

\usepackage{xparse}
\usepackage{calrsfs}
\usepackage{cleveref}

\DeclareMathAlphabet{\pazocal}{OMS}{zplm}{m}{n}
\setlength{\marginparwidth}{2.1cm} 
\usepackage{todonotes}

\hyphenation{pre-shea-ves}

\newtheorem{theorem}{Theorem}[section]
\newtheorem{corollary}[theorem]{Corollary}

\newtheorem{lemma}[theorem]{Lemma}

\newtheorem{proposition}[theorem]{Proposition}

\newtheorem{athm}{Theorem}

\newtheorem{acor}{Corollary}[athm]

\theoremstyle{definition}
\newtheorem{definition}[theorem]{Definition}

\newtheorem{remark}[theorem]{Remark}
\newtheorem{example}[theorem]{Example}
\newtheorem{notation}[theorem]{Notation}

\newcommand{\cA}{\pazocal{A}} 
\newcommand{\cL}{\pazocal{L}}
\newcommand{\cE}{\pazocal{E}}
\newcommand{\cF}{\pazocal{F}}
\newcommand{\cG}{\pazocal{G}}
\newcommand{\Z}{\mathbb{Z}}
\newcommand{\Mod}{\mathrm{Mod}}
\newcommand{\Gr}{\mathrm{Gr}}
\newcommand{\GrCob}{\mathrm{GrCob}}
\newcommand{\Fin}{\mathrm{Fin}}
\newcommand{\Disk}{\mathrm{Disk}}
\newcommand{\Cob}{\mathrm{Cob}}
\newcommand{\Mfld}{\mathrm{Mfld}}
\newcommand{\map}{\mathrm{Map}}
\newcommand{\fib}{\mathrm{fib}}
\newcommand{\E}{\mathbf{E}}
\newcommand{\V}{\pazocal{V}}
\newcommand{\colim}{\mathrm{colim}}
\newcommand{\C}{\pazocal{C}}
\newcommand{\D}{\pazocal{D}}
\newcommand{\Psh}{\mathrm{Psh}}
\newcommand{\id}{\mathrm{id}}
\newcommand{\Ss}{\pazocal{S}}
\newcommand{\too}{\longrightarrow}
\newcommand{\op}{\mathrm{op}}
\newcommand{\GFT}{\mathrm{GFT}}
\newcommand{\Cospan}{\mathrm{Cospan}}
\newcommand{\ot}{\leftarrow}
\newcommand{\Fun}{\mathrm{Fun}}
\newcommand{\cat}{\mathrm{Cat_\infty}}
\newcommand{\CAlg}{\mathrm{CAlg}}
\newcommand{\Ei}{{\E_\infty}}
\newcommand{\OC}{\pazocal{OC}}
\newcommand{\cO}{\pazocal{O}}
\newcommand{\Frob}{\mathrm{Frob}}
\newcommand{\CFrob}{\Frob_{\E_\infty}}
\newcommand{\CFrobext}{\pazocal{F}\mathrm{rob}_{\E_\infty}^\ext}
\newcommand{\Nat}{\mathrm{Nat}}
\newcommand{\Sp}{\mathrm{Sp}}
\newcommand{\ext}{\mathrm{ext}}
\newcommand{\lax}{\mathrm{lax}}
\newcommand{\Imm}{\mathrm{Im}}
\DeclareFontFamily{U}{min}{}
\DeclareFontShape{U}{min}{m}{n}{<-> udmj30}{}
\newcommand\yo{\!\text{\usefont{U}{min}{m}{n}\symbol{'207}}\!} 
\title{Universal property of graph cobordisms}
\author{Andrea Bianchi and Adela YiYu Zhang}
\date{}

\begin{document}
\begin{abstract}
We exhibit the symmetric monoidal $\infty$-category $\GrCob$ of graph cobordisms between spaces as a full $\infty$-subcategory of the $\infty$-category $\Psh(\Gr)$ of presheaves over $\Gr$, where $\Gr$ is the symmetric monoidal $\infty$-category of graph cobordisms between finite sets. We also describe a universal property for $\GrCob$: it is, in a suitable sense, the free symmetric monoidal extension of $\Gr$ endowed with the factorization homology $\int_X*$ of the universal $\Ei$-Frobenius algebra for all spaces $X$. As a corollary, we identify the space of universal natural operations on the factorization homologies, in particular the Hochschild homology, of $\Ei$-Frobenius algebras.
\end{abstract}
\maketitle

\section{Introduction}
The symmetric monoidal $\infty$-category $\GrCob$ of \emph{graph cobordisms between spaces} was introduced in \cite{Bianchi:stringtopology} as a convenient source for string topology operations. More precisely, for any $\Ei$-ring spectrum $R$ and any $R$-oriented Poincar\'e duality space $M$, the first named author costructs a ``graph field theory'', i.e.~a symmetric monoidal $R$-linear functor $\GFT_M$ from a suitable $R$-linearisation of $\GrCob^\op$ to the $R$-linear $\infty$-category $\Mod_R(\Sp)$ of $R$-modules in spectra. Evaluation at suitable morphisms in $\GrCob^\op$ recovers the most basic homotopy invariant string topology operations, in particularly the Chas--Sullivan product.

The construction of $\GrCob$ from \cite{Bianchi:stringtopology} is quite involved. One of the main ingredients is the symmetric monoidal $\infty$-category $\Gr$ of \emph{graph cobordisms between finite sets}, which can be characterised as the symmetric monoidal $\infty$-category corepresenting $\Ei$-Frobenius algebras, thanks to a recent result of Barkan--Steinebrunner \cite{BarkanSteinebrunner}; see later Theorem \ref{thm:BS}. Here an $\Ei$-Frobenius algebra in a symmetric monoidal $\infty$-category is an $\Ei$-algebra $\cA$ together with a map $\lambda:\cA\to\mathbf{1}$ to the monoidal unit, such that postcomposing with the multiplication yields a nondegenerate pairing $\cA\otimes \cA\xrightarrow{\mu}\cA\xrightarrow{\lambda}\mathbf{1}$.\footnote{This notion was introduced as a \emph{Frobenius algebra object} in \cite[4.6.5.1]{HA}.}  It turns out that $\Gr$ is equivalent to a symmetric monoidal full $\infty$-subcategory of $\GrCob$ \cite[Corollary 7.11]{Bianchi:stringtopology}; nevertheless, no ``extension'' of the universal property of $\Gr$ to a universal property of $\GrCob$ was provided, and even less employed, in loc.cit.. The graph field theory $\GFT_M$ was rather constructed by using the universal property of $\Gr$, together with a series of ad hoc and somewhat involved manipulations.

The first goal of this article is to provide a universal property for $\GrCob$ in terms of factorization homology over arbitrary spaces of the $\Ei$-Frobenius algebra corresponding to the inclusion $\Gr\to\GrCob$. The approach we take is inspired by and parallel to the work of Barkan, Steinebrunner and the second named author \cite{BSZ}, where a universal property of the open-closed 2-dimensional cobordism category $\OC$ was established in terms of Hochschild homology of $\E_1$-Frobenius algebra, and many of the proofs in this article make use of the technical results established therein.

In order to formulate this universal property, we need the following definitions.
\begin{definition}
\label{defn:intXA}
For a space $X\in\Ss$, we abbreviate by $\Fin_{/X}$ the $\infty$-category $\Fin\times_\Ss\Ss_{/X}$, where the pullback is formed along the inclusion $\Fin\hookrightarrow\Ss$ and the source functor $\Ss_{/X}\to\Ss$. For an arbitrary functor $F\colon\Fin\to\V$, we denote by $\int_XF$ the following colimit, if it exists:
\[\int_XF:=\colim_{\Fin_{/X}}F\in\V.
\] 
In the above formula the restriction of $F$ along the source functor $\Fin_{/X}\to\Fin$ is implicit. We refer to $\int_XF$ as the \emph{factorization homology over $X$ with coefficients in $F$}. 
\end{definition}
When $F\colon\Fin\to\V$ is a symmetric monoidal functor and $X$ is a framed $n$-manifold, this definition recovers the usual notion of factorization homology of an $\Ei$-algebra considered as an $\E_n$-algebra, see \cite[Proposition 5.7]{ayala2020handbook}.  In this case, the factorization homology $\int_X F$ also agrees with (the underlying object of) the tensor product ``$X \otimes F$'', i.e.~the constant colimit $\colim_XF$ computed in $\CAlg(\V)$. Most notably, when $X=S^1$, this agrees with the Hochschild homology of the underlying $\E_1$-algebra; see also \cite[IV.2.2]{NS2018} and also Lemma \ref{lem:final Disk/S^1->Fin} below.

We introduce the factorization homology $\int_XF$ for arbitrary functors out of $\Fin$ in order to ease notation later.
\begin{definition}
\label{defn:CFrobext}
We say that an $\E_\infty$-Frobenius algebra $\cA:\Gr\to\V$ in a symmetric monoidal $\infty$-category $\V$ is \textit{extendable} if the following conditions hold:
\begin{itemize}
\item For all spaces $X\in\Ss$, the factorization homology $\int_X\cA$ exists in $\V$; here we consider $\Fin$ as an $\infty$-subcategory of $\Gr$;
\item For all $X\in\Ss$ and $v\in\V$, the following assembly map is an equivalence:
\[
\int_{X}\left(\cA\otimes v\right)\xrightarrow{\simeq}\left(\int_X\cA\right)\otimes v.
\]
\end{itemize}
We introduce an $\infty$-subcategory $\CFrobext\subseteq\CAlg(\cat)_{\Gr/}$ whose objects are all extendable $\E_\infty$-Frobenius algebras $(\V,\cA)$. A morphism $F\colon(\V,\cA)\to(\V',\cA')$ in $\CAlg(\cat)_{\Gr/}$ between extendable $\Ei$-Frobenius algebras belongs to $\CFrobext$ if, for all $X\in\Ss$, the following assembly map is an equivalence:
\[
\int_X\cA'\simeq\int_XF(\cA)\to F\left(\int_X\cA\right).
\]
\end{definition}
\begin{athm}
\label{thm:A}
Consider $\GrCob$ as an object in $\CAlg(\cat)_{\Gr/}$ via the inclusion $\iota\colon\Gr\hookrightarrow\GrCob$. Then $\GrCob$ is extendable  and is an initial object in $\CFrobext$.
\end{athm}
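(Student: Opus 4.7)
The plan is to leverage the embedding $\GrCob\hookrightarrow\Psh(\Gr)$ established earlier in the paper, combined with the universal property of $\Gr$ (Theorem \ref{thm:BS}) as corepresenting $\Ei$-Frobenius algebras. The governing principle is that every object of $\GrCob$ should be identified, under this embedding, with a factorization homology of the form $\int_X\iota|_{\Fin}$, where $\iota\colon\Gr\hookrightarrow\GrCob\hookrightarrow\Psh(\Gr)$ is the composite inclusion; this reduces the bulk of the theorem to a manipulation of colimits in $\Psh(\Gr)$ together with properties of Day convolution.

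For extendability, I would compute $\int_X\iota|_\Fin$ as a colimit in the cocomplete category $\Psh(\Gr)$ and verify that under the embedding $\GrCob\hookrightarrow\Psh(\Gr)$ this colimit corresponds to the space $X\in\GrCob$; in particular, it lies in the full subcategory $\GrCob$. The assembly map equivalence then follows from the fact that the Day convolution symmetric monoidal structure on $\Psh(\Gr)$ preserves colimits in each variable, so that
\[
\left(\int_X\iota\right)\otimes v\simeq\colim_{\Fin_{/X}}(\iota\otimes v)\simeq\int_X(\iota\otimes v),
\]
with the equivalences being computed first in $\Psh(\Gr)$ and then transported back to $\GrCob$.

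For initiality, given $(\V,\cA)\in\CFrobext$, Theorem \ref{thm:BS} supplies the underlying symmetric monoidal functor $\cA\colon\Gr\to\V$. I would construct the extension $F\colon\GrCob\to\V$ in $\CAlg(\cat)_{\Gr/}$ by sending an object of $\GrCob$ (presented as $\int_X\iota$) to $\int_X\cA$, which exists by extendability of $\cA$. To make this precise, one uses the free cocompletion property of $\Psh(\Gr)$ to produce a canonical colimit-preserving extension of $\cA$, and then the object-level extendability hypothesis guarantees that its restriction to $\GrCob$ lands in $\V$; the morphism-level assembly condition in the definition of $\CFrobext$ ensures that $F$ itself belongs to $\CFrobext$. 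Uniqueness is forced: any $F$ in $\CFrobext$ extending $\cA$ must satisfy $F(\int_X\iota)\simeq\int_XF(\iota)=\int_X\cA$, and its behavior on morphisms is then pinned down by the universal property of colimits combined with the restriction $F|_{\Gr}\simeq\cA$.

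The main obstacle is to rigorously establish the symmetric monoidal structure on the extension $F$ and its compatibility with the ambient $\Gr$-under structure. This requires combining Day convolution compatibility on $\Psh(\Gr)$ with the extendability hypotheses on both objects and morphisms, and in particular depends on the generation of $\GrCob$, under tensor products and colimits, by $\Gr$ together with objects of the form $\int_X\iota$. The analogous verification was carried out in \cite{BSZ} for the open-closed cobordism category $\OC$ and Hochschild homology of $\E_1$-Frobenius algebras, and I expect the technical machinery developed there to transfer to the present setting, after adjustments that account for the indexing over all spaces rather than only $S^1$.
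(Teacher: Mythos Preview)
Your approach has two structural problems. First, the embedding $\GrCob\hookrightarrow\Psh(\Gr)$ is Theorem~\ref{thm:B}, which in the paper comes \emph{after} Theorem~\ref{thm:A}; more seriously, the \emph{symmetric monoidal} structure on this embedding is obtained by applying Corollary~\ref{cor:A} (a consequence of Theorem~\ref{thm:A}) to $\V=\Psh(\Gr)$. Your assembly-map argument for extendability needs that symmetric monoidal structure in order to pass between $\otimes_{\GrCob}$ and Day convolution, so as written it is circular. The fully faithful part of Theorem~\ref{thm:B} can indeed be established independently (from Lemmas~\ref{lem: factorizationhomologyinGrCob} and~\ref{lem:colimhom} alone), but full faithfulness by itself does not let you compare tensor products across the embedding.

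Second, your initiality argument invokes the free-cocompletion universal property of $\Psh(\Gr)$ to manufacture a colimit-preserving extension $\Psh(\Gr)\to\V$ of $\cA$. But Definition~\ref{defn:CFrobext} does not assume $\V$ is cocomplete; it only asks that the specific colimits $\int_X\cA$ exist and commute with tensoring. There is thus no such extension in general, and restricting a nonexistent functor to $\GrCob$ is not a construction. The paper avoids both issues by working directly with \emph{operadic} left Kan extensions: extendability of $(\GrCob,\iota)$ is verified by an explicit computation inside $\GrCob$ (Lemma~\ref{lem: factorizationhomologyinGrCob} and Corollary~\ref{cor: factorizationhomologyinGrCob}), and initiality follows by observing that a morphism $(\GrCob,\iota)\to(\V,\cA)$ in $\CFrobext$ is precisely a \emph{strong} symmetric monoidal operadic left Kan extension of $\cA$ along $\iota$. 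The existence of this extension under the bare extendability hypotheses, and its strongness, are the content of Proposition~\ref{prop:LKS-symmonoidal}, which in turn rests on the finality result Corollary~\ref{cor:final} together with the criteria from \cite[Theorem~7.6, Lemma~7.7]{BSZ}. Your closing paragraph correctly anticipates that machinery from \cite{BSZ} is needed here; the point is that it must be invoked directly, not mediated through $\Psh(\Gr)$.
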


As an immediate application of Theorem \ref{thm:A}, we obtain the following corollary, providing string operations parametrised by $\GrCob$ at the level of factorization homologies of $\Ei$-Frobenius algebras, and by restriction, at the level of Hochschild homology. 
\begin{acor}
\label{cor:A}
Let $\V$ be a cocomplete symmetric monoidal $\infty$-category whose monoidal product preserves colimits in each variable separately (e.g. $\V$ is the $\infty$-category of $R$-modules in spectra for some $\Ei$-ring spectrum $R$). Let $\cA\colon\Gr\to\V$ be an $\Ei$-Frobenius algebra. Then there is an essentially unique symmetric monoidal extension
\[
\int_{(-)}\cA\colon\GrCob\to\V
\]
whose value at any space $X\in\GrCob$ is equivalent to the factorization homology $\int_X\cA$ along the canonical map  induced by the functor $\Fin_{/X}\to\GrCob_{/X}$. Moreover the underlying functor of $\int_{(-)}\cA$ is equivalent to the left Kan extension $\iota_!\cA$ of $\cA\colon\Gr\to\V$ along $\iota\colon\Gr\hookrightarrow\GrCob$. 
\end{acor}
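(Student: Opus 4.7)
The plan is to deduce the corollary from Theorem \ref{thm:A} by exhibiting $(\V,\cA)$ as an object of $\CFrobext$. First I would check the two conditions in Definition \ref{defn:CFrobext}: cocompleteness of $\V$ guarantees existence of the factorization homology $\int_X\cA=\colim_{\Fin_{/X}}\cA$ for every space $X$, and since $\otimes$ preserves colimits in each variable, the assembly map
\[
\int_X(\cA\otimes v)\xrightarrow{\simeq}\left(\int_X\cA\right)\otimes v
\]
is an equivalence, as both sides compute $\colim_{(S,g)\in\Fin_{/X}}\cA(S)\otimes v$.

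The initiality of $\GrCob$ in $\CFrobext$ from Theorem \ref{thm:A} then produces an essentially unique morphism $\int_{(-)}\cA\colon(\GrCob,\iota)\to(\V,\cA)$ in $\CFrobext$, which is the essentially unique symmetric monoidal extension claimed. To identify its value at a space $X\in\GrCob$ with $\int_X\cA$, I would invoke the morphism condition in $\CFrobext$: it supplies an equivalence $\int_X\cA\xrightarrow{\simeq}(\int_{(-)}\cA)(\int_X\iota)$. Combined with the extendability of $\GrCob$ asserted in Theorem \ref{thm:A}, which canonically identifies $\int_X\iota\in\GrCob$ with $X$ itself, this yields $(\int_{(-)}\cA)(X)\simeq\int_X\cA$; by construction, this equivalence is induced by the canonical functor $\Fin_{/X}\to\GrCob_{/X}$.

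The remaining task is to identify the underlying functor with the left Kan extension $\iota_!\cA$. My plan is to show that $\iota_!\cA$ itself, suitably upgraded, defines a morphism in $\CFrobext$ from $(\GrCob,\iota)$ to $(\V,\cA)$, so that essential uniqueness of the initial morphism forces $\iota_!\cA\simeq\int_{(-)}\cA$. This requires two ingredients: equipping $\iota_!\cA$ with a symmetric monoidal structure via monoidal left Kan extension---using that $\iota$ is symmetric monoidal and that $\otimes$ preserves colimits on $\V$---and verifying the morphism condition, namely that the canonical map $\int_X\cA\to\iota_!\cA(X)$ induced by $\Fin_{/X}\hookrightarrow\Gr_{/X}$ is an equivalence. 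I expect the latter to be the main obstacle: it is a cofinality statement comparing $\colim_{\Fin_{/X}}\cA$ and $\colim_{\Gr_{/X}}\cA$, and the argument should invoke the Barkan--Steinebrunner description of $\Gr$ (Theorem \ref{thm:BS}) to show that the extra morphisms in $\Gr_{/X}$ beyond those in $\Fin_{/X}$, being formal consequences of the universal Frobenius structure, contribute nothing new to the colimit of $\cA$.
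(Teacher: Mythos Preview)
Your argument for the first two claims---that $(\V,\cA)$ is extendable and that the unique morphism from Theorem~\ref{thm:A} takes the value $\int_X\cA$ at $X$---is correct and matches the paper's strategy.

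For the identification with $\iota_!\cA$, however, your approach is more circuitous than necessary and leaves a genuine gap. The paper's proof of Theorem~\ref{thm:A} actually \emph{constructs} the unique morphism $(\GrCob,\iota)\to(\V,\cA)$ as the operadic left Kan extension $\iota_!^\otimes\cA$ (Proposition~\ref{prop:LKS-symmonoidal}); since by definition the underlying functor of an operadic left Kan extension is the plain left Kan extension, the identification $\int_{(-)}\cA\simeq\iota_!\cA$ is immediate. Your plan instead re-derives both ingredients (the symmetric monoidal structure on $\iota_!\cA$ and the morphism condition) after already invoking Theorem~\ref{thm:A} as a black box, which is redundant: those two ingredients are precisely what goes into the proof of Theorem~\ref{thm:A}.

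The more serious issue is your proposed route to the cofinality of $\Fin_{/X}\hookrightarrow\Gr\times_{\GrCob}\GrCob_{/X}$. Your suggestion to deduce it from the universal property of $\Gr$ (Theorem~\ref{thm:BS}) by arguing that ``extra morphisms are formal consequences of the Frobenius structure'' is not a workable plan: the Frobenius universal property of $\Gr$ says nothing about how $\Gr$ sits inside $\GrCob$, and in particular gives no handle on morphisms $A\to X$ in $\GrCob$ with $X$ a general space. The paper proves this cofinality (Corollary~\ref{cor:final}) by a direct computation of morphism spaces in $\GrCob$ via its presentation as a localisation of $\Gr_\Ss$ (Lemma~\ref{lem:colimhom}), and then applies the finality criterion of \cite[Lemma~7.1]{BSZ}. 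This computation is the substantive input, and your proposal does not supply it.
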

In a similar spirit, Barkan, Steinebrunner and the second named author have provided in \cite{BSZ} a universal property for the symmetric monoidal $\infty$-category $\OC$ of \emph{open-closed 2-dimensional oriented cobordisms}: while the symmetric monoidal $\infty$-category $\cO$ of \emph{open 2-dimensional oriented cobordisms} carries the universal $\E_1$-Frobenius algebra $\cA$, its extension $\OC$ is obtained by universally adjoining the factorization homology $\int_{M}\cA$ over all compact oriented 1-manifolds $M$. Most notably, taking $M=S^1$, we are also universally adjoining the Hochschild homology $\int_{S^1}\cA$ of $\cA$.

\begin{remark}
\label{rem:esquare}
The first named author constructs in \cite[Remark 1.5]{Bianchi:stringtopology} a symmetric monoidal functor that we denote here by $\epsilon\colon\OC\to\GrCob$. Let
$\cO\subset\OC$ denote the symmetric monoidal $\infty$-subcategory of \emph{open 2-dimensional oriented cobordisms}, i.e.~ the full $\infty$-subcategory spanned by disjoint unions of disks.
We then have that $\epsilon$ restricts to a functor $\epsilon\colon\cO\to\Gr$, so we obtain a commutative square of symmetric monoidal $\infty$-categories as follows:
\[
\begin{tikzcd}[row sep=10pt]
\cO\ar[r,"\epsilon"]\ar[d,hook] & \Gr\ar[d,"\iota",hook]\\
\OC\ar[r,"\epsilon"]&\GrCob.
\end{tikzcd}
\]
In fact, using that both vertical functors are fully faithful and that a compact 1-manifold is a disjoint union of disks if and only if it is homotopy equivalent to a finite set, we obtain that this square is even a pullback square. In light of Theorem A and the results of \cite{BarkanSteinebrunner,BSZ}, we can interpret this square via universal properties: the top and bottom horizontal maps are induced by the observations that any $\Ei$-Frobenius algebra has an underlying $\E_1$-Frobenius algebra, and that the existence of factorization homology over arbitrary spaces implies existence of factorization homology over compact 1-manifolds. The vertical maps  record respectively the $\E_1$ and $\E_\infty$-Frobenius algebras of which we are taking factorization homologies. 
\end{remark}

\begin{remark}
Precomposition by the functor $\epsilon\colon\OC\to\GrCob$ from Remark \ref{rem:esquare} yields, for any $(\V,\cA)\in\CFrobext$, a symmetric monoidal functor
\[
\int_{(-)}\cA\circ \epsilon\colon\OC\to\V,
\]
 also known as an \emph{open-closed 2-dimensional topological field theory}.
In particular, it evaluates to  $\cA(*)$ at the disk, and to the Hochschild homology $HH(\cA):=\int_{S^1}\cA$ at the circle $S^1$.
The homology of the morphism spaces of $\OC$, which are equivalent to suitable moduli spaces of Riemann surfaces with boundary, provides higher string operations relating the underlying object and the Hochschild homology of an $\Ei$-Frobenius algebra considered as an $\E_1$-Frobenius algebra. 
Open-closed 2-dimensional field theory of this kind have been extensively studied in the literature; a list of references includes \cite{TradlerZeinalian,TradlerZeinalian2,CostelloOC,CostelloGauge,CostelloTradlerZeinalian,Kaufmann1,Kaufmann2,Kaufmann3,lauda2007state,lauda2008open,WahlWesterland,wahl2016universal,muller2024categorified,BSZ}. 
Our Theorem \ref{thm:A} and Corollary \ref{cor:A} thus serve as a refinement of the cited literature, and we will make the connection even more precise in Theorem \ref{thm:C} below. 
\end{remark}
\begin{remark}
Of particular interest is the example where the $\Ei$-Frobenius algebra $\cA$ is the cochain complex $C^*(M;R)$, where $R$ is a discrete commutative ring and $M$ is an $R$-oriented manifold. On the one hand the $R$-orientation can be used to upgrade $C^*(M;R)$ to an $\Ei$-Frobenius algebra in the ``Picard-twisted'' symmetric monoidal $\infty$-category $\pazocal{D}(R)/\mathrm{Pic}(\pazocal{D}(R))$ of chain complexes over $R$: this is defined in \cite{BarkanSteinebrunner} as the quotient of $\pazocal{D}(R)$ by the action of $\mathrm{Pic}(\pazocal{D}(R))$ given by tensor product; see also \cite[Definition 5.3]{Bianchi:stringtopology} for an alternative, 2-categorical perspective.
On the other hand $\int_{S^1}\cA$ is equivalent, when $M$ is simply connected, to the cochain complex $C^*(\cL M;R)$ of the free loop space of $M$. In this case our construction provides string operations relating the cochain complexes $C^*(M;R)$ and $C^*(\cL M;R)$; these string operations are given by the homology of morphism spaces of $\GrCob$, and viaa restriction, by the homology of the morphism spaces of $\OC$. We stress however that if $M$ is not simply connected then in general $C^*(\cL M;R)$ is not equivalent to $\int_{S^1}C^*(M;R)$; in this case our results are not able to recover those from \cite{Bianchi:stringtopology}, which apply to arbitrary $R$-oriented manifolds.
\end{remark}

A notable application of Corollary \ref{cor:A} through the Yoneda embedding $\yo_\Gr\colon\Gr\to\Psh(\Gr)$, which is a symmetric monoidal functor if we equip $\Psh(\Gr)$ with the symmetric monoidal structure given by the Day convolution. In fact, Day convolution can be characterized as the essentially unique symmetric monoidal structure on $\Psh(\Gr)$ preserving colimits separately in each variable, such that $\yo_\Gr$ is enhanced to a symmetric monoidal functor \cite[Corollary 4.8.1.12]{HA}. Letting $\V=\Psh(\Gr)$ in Corollary \ref{cor:A}, we obtain a symmetric monoidal functor $\int_{(-)}\yo_\Gr\colon\GrCob\to\Psh(\Gr)$, whose underlying functor is the left Kan extension $\iota_!\yo_\Gr$. We will prove the following theorem that provides an alternative characterization of $\GrCob$.
\begin{athm}
\label{thm:B}
The left Kan extension $\iota_!\yo_\Gr\colon\GrCob\to\Psh(\Gr)$ is fully faithful and agrees with the composite symmetric monoidal functor
\[
\GrCob\xrightarrow{\yo_{\GrCob}}\Psh(\GrCob)\xrightarrow{\iota^*} \Psh(\Gr).
\]
Either functor identifies $\GrCob$ as the full symmetric monoidal $\infty$-subcategory of $\Psh(\Gr)$ spanned by the colimits $\int_X \yo_\Gr\simeq\colim_{\Fin_{/X}}\yo_\Gr$ for all spaces $X\in\Ss$.
\end{athm}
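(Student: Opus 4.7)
The plan is to identify both $\iota_!\yo_\Gr$ and $\iota^*\yo_{\GrCob}$ as the unique symmetric monoidal extension of $\yo_\Gr$ along $\iota$ characterized by initiality in $\CFrobext$ (Theorem~\ref{thm:A} and Corollary~\ref{cor:A}), and then deduce full faithfulness by a direct Yoneda calculation.

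First, applying Theorem~\ref{thm:A} to $(\GrCob,\iota)\in\CFrobext$ itself forces the identity of $\GrCob$ to be the unique extension of $\iota$, so by Corollary~\ref{cor:A} we obtain the key formula $X\simeq \int_X\iota = \colim_{\Fin_{/X}}\iota(T)$ in $\GrCob$ for every $X\in\Ss$. Next, applying Corollary~\ref{cor:A} with $\V=\Psh(\Gr)$ under Day convolution and $\cA=\yo_\Gr$---which is an extendable $\Ei$-Frobenius algebra since Day convolution preserves colimits separately in each variable---produces the symmetric monoidal extension $\int_{(-)}\yo_\Gr\colon\GrCob\to\Psh(\Gr)$ whose underlying functor is $\iota_!\yo_\Gr$, with value $\int_X\yo_\Gr = \colim_{\Fin_{/X}}\yo_\Gr(T)$ at~$X$.

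Second, $\iota^*\yo_{\GrCob}$ is another symmetric monoidal functor $\GrCob\to\Psh(\Gr)$ (via the symmetric monoidal restriction $\iota^*\colon\Psh(\GrCob)\to\Psh(\Gr)$ along the symmetric monoidal $\iota$) that restricts to $\yo_\Gr$ along the fully faithful $\iota$. To identify it with $\iota_!\yo_\Gr$ by initiality in $\CFrobext$, the substantive step is to show membership of $\iota^*\yo_{\GrCob}$ in $\CFrobext$, i.e.\ that the canonical assembly map
\[
\colim_{(T,T\to X)\in\Fin_{/X}}\map_\Gr(S,T)\xrightarrow{\simeq}\map_{\GrCob}(S,X)
\]
is an equivalence for every $X\in\Ss$ and $S\in\Gr$. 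Equivalently, the covariant representable $\map_{\GrCob}(S,-)\colon\GrCob\to\Ss$ must preserve the factorization-homology colimit $X=\int_X\iota$ in $\GrCob$. I expect this to be the main obstacle of the proof: it is not a formal consequence of Theorem~\ref{thm:A}, and its verification is likely achieved by appealing to the explicit construction of $\GrCob$ in \cite{Bianchi:stringtopology}---under which a morphism $S\to X$ should present as a graph cobordism $S\to T$ followed by a map of spaces $T\to X$, for some $T\in\Fin_{/X}$---or by the parallel technical input from \cite{BSZ}.

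Granted the assembly equivalence, initiality yields $\iota^*\yo_{\GrCob}\simeq\iota_!\yo_\Gr$. Full faithfulness of the common functor then follows by a direct Yoneda calculation: for $X,Y\in\GrCob$,
\[
\map_{\Psh(\Gr)}(\iota^*\yo_{\GrCob}(Y),\iota^*\yo_{\GrCob}(X))\simeq\lim_{T\in\Fin_{/Y}^{\op}}\map_{\GrCob}(\iota(T),X)\simeq\map_{\GrCob}(Y,X),
\]
using the assembly equivalence together with Yoneda in $\Psh(\Gr)$ for the first identification, and the equivalence $Y\simeq\colim_{\Fin_{/Y}}\iota(T)$ in $\GrCob$ for the second. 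The image identification as the full subcategory of $\Psh(\Gr)$ spanned by $\int_X\yo_\Gr$ for $X\in\Ss$ is then immediate from Corollary~\ref{cor:A}.
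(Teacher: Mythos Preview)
Your proposal is correct, and you have correctly isolated the single non-formal input: the assembly equivalence
\[
\colim_{T\in\Fin_{/X}}\map_{\Gr}(S,T)\xrightarrow{\ \simeq\ }\map_{\GrCob}(S,X),
\]
which in the paper is exactly Lemma~\ref{lem:colimhom} and is indeed proved from the explicit description of $\GrCob$ as a localisation of $\Gr_\Ss$.

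The packaging, however, differs from the paper's. The paper does not invoke Theorem~\ref{thm:A} or initiality in $\CFrobext$ at all; it argues more directly via the \emph{denseness} criterion (Lemma~\ref{lem:denseness}): it shows $\iota_!\iota\simeq\id_{\GrCob}$ using Lemma~\ref{lem: factorizationhomologyinGrCob} and Corollary~\ref{cor:final}, which immediately gives full faithfulness of $\yo^\circ_\Gr=\iota^*\yo_{\GrCob}$, and then identifies $\iota_!\yo_\Gr\simeq\yo^\circ_\Gr$ as plain functors by checking the canonical transformation is an equivalence pointwise via Lemma~\ref{lem:colimhom}. Your route instead leverages the universal property already packaged in Theorem~\ref{thm:A}/Corollary~\ref{cor:A}: once the assembly lemma shows that $\iota^*\yo_{\GrCob}$ is a morphism in $\CFrobext$, uniqueness forces it to coincide with $\iota_!^\otimes\yo_\Gr$ \emph{as a symmetric monoidal functor}. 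This buys you the symmetric monoidal identification for free, a point on which the paper's proof is comparatively implicit. On the other hand, your use of initiality is mildly redundant at the level of underlying functors: once you have the assembly equivalence, the universal property of the plain left Kan extension already produces a natural transformation $\iota_!\yo_\Gr\Rightarrow\iota^*\yo_{\GrCob}$ which is pointwise an equivalence, without passing through $\CFrobext$. Your final Yoneda computation for full faithfulness is essentially an unwinding of the implication (1)$\Rightarrow$(2) in Lemma~\ref{lem:denseness}, so that part is the same argument in different clothing.
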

In a similar spirit, Barkan, Steinebrunner and the second named author identify $\OC$ as the full $\infty$-subcategory of $\Psh(\cO)$ spanned by colimits $\int_M\yo_{\cO}$ for all compact 1-manifolds $M$ \cite{BSZ}.

We conclude the article by identifying the spaces of universal ``natural operations'' between factorization homologies of $\Ei$-Frobenius algebras. The notion of natural operations on ``structured'' $\E_1$-algebras in $\pazocal{D}(\mathbb{Z})$ was introduced in \cite{wahl2016universal} as natural transformations of the Hochschild homology functor. 
In \cite[Section 3.3]{BSZ}, this notion was lifted to the level of spaces and $\OC$ was identified as the $\infty$-category of universal natural operations on the Hochschild homology of $\E_1$-Frobenius algebras in $\Ss$. In order to uniformly treat natural operations for both structured $\Ei$-algebras and structured $\E_1$-algebras, we shall introduce in Definition \ref{defn:Nat} the general notion of ``context for natural operations'', using which we will introduce the symmetric monoidal $\infty$-categories $\Nat_{\Disk_1}[\cO,\Mfld_1]$ and $\Nat_\Fin[\Gr,\Ss]$ of natural operations on the factorization homologies of $\E_1$ and $\Ei$-Frobenius algebras.
Our main theorem about natural operations  is the following, see Theorem \ref{thm:formaloperations} for the complete statement.
\begin{athm}\label{thm:C}
There is a commutative square of symmetric monoidal $\infty$-categories, with the horizontal arrows being equivalences:
\[
\begin{tikzcd}[row sep=10pt]
\OC\ar[d,"\epsilon"]\ar[r,"\simeq"]&\Nat_{\Disk_1}[\cO,\Mfld_1]\ar[d,"{\Nat[\epsilon]}"]\\
\GrCob\ar[r,"\simeq"]&\Nat_{\Fin}[\Gr,\Ss].
\end{tikzcd}
\]
\end{athm}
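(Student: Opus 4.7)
The plan is to leverage Theorem \ref{thm:B}, together with its analog for $\OC$ from \cite{BSZ}, to establish both horizontal equivalences, and then verify commutativity by tracing through the Yoneda embeddings. The key observation is that both sides of each row should be identified with the same full $\infty$-subcategory of a presheaf category spanned by the factorization homologies $\int_X \yo$ of the corresponding universal Frobenius algebra.

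First I would unpack the ``context for natural operations'' formalism of Definition \ref{defn:Nat}. The upshot should be that $\Nat_\Fin[\Gr,\Ss]$ is, essentially by definition, the full symmetric monoidal $\infty$-subcategory of $\Psh(\Gr)$ (equipped with Day convolution) spanned by the presheaves $\int_X\yo_\Gr\simeq\colim_{\Fin_{/X}}\yo_\Gr$ for $X\in\Ss$, with morphism spaces computing natural transformations between the associated factorization homology functors. By Theorem \ref{thm:B}, this subcategory is precisely $\GrCob$, with the equivalence implemented by $\iota^*\circ\yo_{\GrCob}$ (equivalently, by the left Kan extension $\iota_!\yo_\Gr$). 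The top row equivalence $\OC\simeq\Nat_{\Disk_1}[\cO,\Mfld_1]$ is the analogous identification proved as the main result of \cite[Section 3.3]{BSZ}, and I would cite it directly.

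Finally, to check commutativity of the square, I would unpack how $\Nat[\epsilon]$ acts on presheaves: it is induced by precomposition along $\epsilon\colon\cO\to\Gr$, combined with the fact that the underlying-space functor $\Mfld_1\to\Ss$ restricts, along $\Disk_1\hookrightarrow\Mfld_1$, to the inclusion $\Fin\hookrightarrow\Ss$; this is the same observation used in Remark \ref{rem:esquare}, namely that a compact $1$-manifold is homotopy equivalent to a finite set exactly when it is a disjoint union of disks. Under the horizontal identifications, $\Nat[\epsilon]$ sends $\int_M\yo_\cO$ to $\int_{|M|}\yo_\Gr$, where $|M|\in\Ss$ is the underlying homotopy type of $M$; on the other hand $\epsilon\colon\OC\to\GrCob$ sends $M$ to $|M|$ by construction. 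Hence the square commutes on objects of the form $\int_M\yo_\cO$, and full faithfulness of the horizontal functors upgrades this to an equivalence of the two composite symmetric monoidal functors. The main obstacle I anticipate lies in the first step: giving a sufficiently flexible formulation of the ``context for natural operations'' so that both horizontal identifications and the induced functor $\Nat[\epsilon]$ emerge from a single formal mechanism, and carefully verifying that all equivalences respect the symmetric monoidal structures (disjoint union on the cobordism side, Day convolution on the presheaf side).
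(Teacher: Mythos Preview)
Your overall architecture matches the paper's: both horizontal equivalences come from Theorem~\ref{thm:B} and its analog in \cite{BSZ}, and commutativity is checked after embedding everything into presheaf categories. However, there is a genuine gap in your treatment of the vertical map $\Nat[\epsilon]$.

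You assert that $\Nat[\epsilon]$ sends $\int_M\yo_\cO$ to $\int_{|M|}\yo_\Gr$, justified only by the observation that the underlying-space functor $\Mfld_1\to\Ss$ restricts on $\Disk_1$ to $\Fin\hookrightarrow\Ss$. That observation produces a comparison functor $\Disk_{/M}\to\Fin_{/|M|}$, but it does \emph{not} tell you that the induced map of colimits
\[
\epsilon_!\Big(\colim_{\Disk_{/M}}\yo_\cO\Big)\simeq\colim_{\Disk_{/M}}\yo_\Gr\circ\epsilon\;\longrightarrow\;\colim_{\Fin_{/|M|}}\yo_\Gr
\]
is an equivalence. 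For that you need $\Disk_{/M}\to\Fin_{/|M|}$ to be \emph{final}, and this is far from automatic: it is the content of Lemma~\ref{lem:final Disk/S^1->Fin}, whose proof (for $M=S^1$) is a nontrivial combinatorial argument via the paracyclic category and Quillen's Theorem~A. Without this finality, $\epsilon_!$ need not land in $\Nat_\Fin[\Gr,\Ss]$ at all, so $\Nat[\epsilon]$ is not even well-defined, let alone compatible with $\epsilon$. The paper isolates this step as Lemma~\ref{lem:compareoperations} (the abstract criterion) plus Lemma~\ref{lem:final Disk/S^1->Fin} (the verification), and it is really the technical heart of the proof. Once you add this ingredient, your argument for commutativity via full faithfulness is fine and essentially equivalent to the paper's appeal to naturality of the restricted Yoneda embedding.
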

Since the factorization homology over $S^1$ of an $\Ei$-algebra agrees with the Hochschild homology of the underlying $\E_1$-algebra, Theorem \ref{thm:C} says that natural operations on the latter factors through natural operations on the former.

\textit{Acknowledgments.} We would like to thank Jan Steinebrunner for helpful conversations related to this project and detailed feedback to a draft of this article. This project can moreover be regarded as an analogue of \cite{BSZ} with $\GrCob$ in place of $\OC$ (and with $\Gr$ in place of $\cO$); we in particular acknowledge that the main ideas leading to Theorems \ref{thm:A} and \ref{thm:B} were adapted from there, and were also originally contributed by Jan Steinebrunner.

A.B. was supported by the Max Planck Institute for Mathematics in Bonn and by the University of Bologna. A.Z. was supported by the European Unions via a Marie
Curie postdoctoral fellowship (project 101150469) and the Centre for Geometry and Topology (DNRF151). We would also like to thank GeoTop  and the University of Copenhagen for the hospitality, with reference to a research visit of A.B. during which this project was born.
\section{Recollections}
We start by recalling the constructions and basic properties of the main players $\Gr$ and $\GrCob$, and their connection to the 2-dimensional cobordism categories $\cO$ and $\OC$.
\begin{definition}
We denote by $\Gr\subset\Cospan(\Ss)$ the $\infty$-subcategory of the $\infty$-category of cospans of spaces spanned by the following objects and morphisms:
\begin{itemize}
    \item objects in $\Gr$ are spaces equivalent to some finite set;
    \item for $A,B\in\Fin$, a morphism in $\Gr$ from $A$ to $B$ is a cospans of spaces of the form $A\to G\ot B$, where $G$ is equivalent to a finite cell complex of dimension at most 1.
\end{itemize}
We refer to $\Gr$ as the $\infty$-category of graph cobordisms between finite sets. Disjoint union of spaces makes $\Gr$ into a symmetric monoidal $\infty$-subcategory of $\Cospan(\Ss)$.
\end{definition}
We remark that $\Gr$ contains $\Fin$, the symmetric monoidal category of finite sets, as a symmetric monoidal $\infty$-category.
A recent result of Barkan--Steinebrunner \cite{BarkanSteinebrunner} (see also \cite{Bianchi:graphcobset}) provides a universal property for $\Gr$.
\begin{definition}
\label{defn:EiFrobenius}
Let $\V$ be a symmetric monoidal $\infty$-category. An \emph{$\Ei$-Frobenius algebra} in $\V$ is a pair $(\cA,\lambda)$ consisting of
\begin{itemize}
    \item an $\Ei$-algebra $\cA$ in $\V$, which we regard as a symmetric monoidal functor $\cA\colon\Fin\to\V$;
    \item a morphism $\lambda\colon\cA(*)\to 1_\V$ in $\V$,
\end{itemize}
such that the composition $\cA(*)\otimes\cA(*)\xrightarrow{\mu}\cA(*)\xrightarrow{\lambda}1_\V$ exhibits $\cA(*)$ as its own dual in $\V$. Here the ``multiplication'' $\mu$ is the image along $\cA$ of the morphism $*\sqcup *\to *$ in $\Fin$.

We denote by $\CFrob\colon\CAlg(\cat)\to\Ss$ the functor associating with $\V\in\CAlg(\cat)$ the mo\-duli space $\CFrob(\V)$ of $\Ei$-Frobenius algebras in $\V$.
\end{definition}
\begin{theorem}[\cite{BarkanSteinebrunner}]
\label{thm:BS}
The functor $\CFrob$ is corepresented by $\Gr$, i.e.~there is an equivalence $\CFrob\simeq\Fun^{\otimes}(\Gr,-)$ of functors $\CAlg(\cat)\to\Ss$.
\end{theorem}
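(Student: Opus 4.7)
The plan is to prove the equivalence by constructing the obvious restriction functor $\Fun^\otimes(\Gr,\V) \to \CFrob(\V)$ for each $\V \in \CAlg(\cat)$ and arguing that it admits an essentially unique inverse, with the constructions being sufficiently natural to yield the claimed equivalence of functors $\CAlg(\cat)\to\Ss$.

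For the restriction direction, given $F\colon\Gr \to \V$ I would extract an $\Ei$-algebra $\cA := F|_\Fin$ via pullback along the inclusion $\Fin \hookrightarrow \Gr$, and a counit $\lambda\colon \cA(*) \to 1_\V$ by evaluating $F$ on the cospan $*\to *\leftarrow\emptyset$, which lies in $\Gr$ since $*$ is a $0$-dimensional cell complex. To verify the self-duality of $(\cA,\lambda)$, I would use that $\Gr$ contains both the cospan $\{0,1\}\to[0,1]\leftarrow\emptyset$ (realizing $\lambda\circ\mu$ via the equivalence $[0,1]\simeq *$) and its opposite $\emptyset \to [0,1]\leftarrow\{0,1\}$ (providing a coevaluation), with the two snake identities encoded by explicit homotopies in $\Cospan(\Ss)$ that collapse embedded intervals in the relevant graph cospans.

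The main obstacle is the reverse direction: showing every $\Ei$-Frobenius algebra extends essentially uniquely to a symmetric monoidal functor on $\Gr$. Morally, each cospan $A \to G\leftarrow B$ in $\Gr$ decomposes by passing to the $0$-skeleton of $G$ (a finite-set datum captured by $\cA$) and then attaching $1$-cells one at a time, each of which is realizable by an expression in $\mu$ and $\lambda$ via self-duality of $\cA(*)$; the Frobenius axioms then encode exactly the relations needed to make the assignment coherent. Promoting this heuristic to a rigorous $\infty$-categorical construction is the technical heart of the argument. Following \cite{BarkanSteinebrunner} (and the alternative approach of \cite{Bianchi:graphcobset}), I would present $\Gr$ as a colimit in $\CAlg(\cat)$ indexed by a combinatorial category of decorated graph models (for example, trivalent graphs with marked input and output boundary), and check that the restriction functor is an equivalence on each layer of the presentation, so that it glues to an equivalence on $\Gr$.

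The delicate part is ensuring compatibility of this colimit decomposition with both the cospan structure on $\Gr\subset\Cospan(\Ss)$ and the symmetric monoidal structure, so that the Frobenius axioms translate into precisely the relations required to identify $\Gr$ with the free symmetric monoidal $\infty$-category generated by a self-dual $\Ei$-algebra. In particular, one must ensure that all higher simplices in the cospan $\infty$-category are accounted for, so that coherence data imposed on finitely many generators (the $\Ei$-algebra maps together with $\lambda$ and its dual) propagates uniquely and consistently throughout the whole of $\Gr$; the naturality of the resulting equivalence in $\V$ is then formal, coming from the fact that both sides of the comparison are expressed as mapping spaces in $\CAlg(\cat)$.
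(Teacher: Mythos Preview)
The paper does not contain a proof of this statement: Theorem~\ref{thm:BS} is stated as a result of Barkan--Steinebrunner \cite{BarkanSteinebrunner} (with \cite{Bianchi:graphcobset} mentioned as an alternative reference) and is simply quoted without argument. There is therefore no proof in the paper to compare your proposal against.

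That said, your sketch is a reasonable outline of the shape such an argument takes, and you yourself correctly identify that the substantive content lies in the cited references rather than in something to be reproduced here. If your intent was to supply a self-contained proof, be aware that the passage ``present $\Gr$ as a colimit in $\CAlg(\cat)$ indexed by a combinatorial category of decorated graph models \dots\ and check that the restriction functor is an equivalence on each layer'' is doing essentially all the work and is not something one can fill in without reproducing a nontrivial portion of \cite{BarkanSteinebrunner} or \cite{Bianchi:graphcobset}; for the purposes of this paper, a citation is the appropriate level of detail.
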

We also briefly recall the definition of $\GrCob$ from \cite{Bianchi:stringtopology}.
\begin{definition}
Consider the symmetric monoidal $\infty$-category $\Cospan(\Fun([1],\Ss))$ of cospans of arrows of spaces.
We denote by $\Gr_\Ss\subset\Cospan(\Fun([1],\Ss))$ the symmetric monoidal $\infty$-subcatego\-ry of  spanned by objects of the form $A\to X$ with $A\in\Fin$, and by morphisms of the form
\[
\begin{tikzcd}[row sep=10pt]
A\ar[r]\ar[d]&G\ar[d]&B\ar[l]\ar[d]\ar[dl,phantom,"\urcorner"very near end]\\
X\ar[r]&W&Y\ar[l]
\end{tikzcd}
\]
such that the right square is a pushout square, and the top row gives a morphism in $\Gr$. We denote by $D_1,D_0\colon\Gr_\Ss\to\Cospan(\Ss)$ the functors sending $(A\to X)$ to $ A,X$ respectively. Note that $D_1$ factors through $\Gr$.

We denote by $\GrCob$ the (symmetric monoidal) localisation of $\Gr_\Ss$ at ``idle morphisms'', i.e.~at morphisms sent to equivalences along $D_0$; and we denote by abuse of notation also by $D_0\colon\GrCob\to\Cospan(\Ss)$ the functor tautologically induced on localisation.
\end{definition}
\begin{notation}
We denote by $-\otimes-$ the monoidal product of $\Gr$ and $\GrCob$ in order to distinguish them from the monoidal products of $\Fin$ and $\Ss$, denoted $-\sqcup-$ instead.
\end{notation}
By \cite[Corollary 7.9]{Bianchi:stringtopology}, the functor $D_0$ induces an equivalence on core groupoids; in particular it is fair to think of objects of $\GrCob$ as being spaces. By \cite[Corollary 7.11]{Bianchi:stringtopology}, the functor $D_0$ restricts to an equivalence between the full $\infty$-subcategory of $\GrCob$ spanned by finite sets, and $\Gr\subset\Cospan(\Ss)$. This provides us in particular with a symmetric monoidal full inclusion $\iota\colon\Gr\hookrightarrow\GrCob$. Combining this with the $\infty$-subcategory inclusion $\Ss\hookrightarrow\GrCob$ from \cite[Corollary 7.10]{Bianchi:stringtopology}, we obtain the following commutative square of symmetric monoidal $\infty$-categories whose vertical arrows are full inclusions, and whose horizontal arrows are wide inclusions:
\begin{equation}\label{sq:Fin->Gr->GrCob}
\begin{tikzcd}[row sep=10pt]
\Fin\ar[r]\ar[d]&\Gr\ar[d,"\iota"]\\
\Ss\ar[r]&\GrCob.
\end{tikzcd}
\end{equation}
In fact this is a pullback square in $\CAlg(\cat)$, but we will not need this remark.

On the other hand, recall the symmetric monoidal $\infty$-category $\Cob_2^{\partial}$ of oriented 2-dimensional cobordisms between oriented compact 1-manifolds with boundary, where cobordisms are oriented compact 2-manifolds with boundary and corners; see for instance \cite[Section 2]{BSZ} for the precise construction. The symmetric monoidal $\infty$-category $\OC$ of \emph{open-closed cobordisms} is defined as the wide $\infty$-subcategory of $\Cob_2^{\partial}$ spanned by cobordisms $A\to W\ot B$ such that no  connected component of $W$ has its entire boundary contained in $B$. Equivalently, we only allow cobordisms that have handle dimension at most 1 relative to the outgoing boundary. The symmetric monoidal $\infty$-subcategory $\cO$ of \emph{open cobordisms} is then defined as the full $\infty$-subcategory of $\OC$ spanned by disjoint unions of disks. By taking pullback of the square (\ref{sq:Fin->Gr->GrCob}) along the functor $\epsilon\colon\OC\to\GrCob$ from Remark \ref{rem:esquare}, we obtain the following commutative cube, in which all vertical functors are full inclusions, all horizontal functors are wide inclusions, and each face is a pullback square in $\CAlg(\cat)$:
\begin{equation}\label{sq:cube}
\begin{tikzcd}[row sep=6pt, column sep=10pt]
\Disk_1\ar[rr]\ar[rd]\ar[dd]&&\cO\ar[dd]\ar[rd,"\epsilon"]\\
&\Fin\ar[rr]\ar[dd]&&\Gr\ar[dd]\\
\Mfld_1\ar[rd]\ar[rr]&&\OC\ar[rd,"\epsilon"]\\
&\Ss\ar[rr]&&\GrCob
\end{tikzcd}
\end{equation}
Here $\Mfld_1$ denotes the symmetric monoidal $\infty$-category of compact oriented 1-manifolds and orientation-preserving embeddings; this is equivalent to the wide $\infty$-subcategory of $\OC$ spanned by cobordisms $A\to W\ot B$ such that the inclusion of the outgoing boundary $B\to W$ is a homotopy equivalence. While $\Disk_1$ denotes the symmetric monoidal full $\infty$-subcategory of $\Mfld_1$ spanned by disjoint unions of disks. 
In particular we observe that $\Disk_1$ corepresents $\E_1$-algebras in symmetric monoidal $\infty$-categories, whereas $\cO$ corepresents $\E_1$-Frobenius algebras in symmetric monoidal $\infty$-categories by \cite{BarkanSteinebrunner}.

\section{Canonical extensions and the universal property of \texorpdfstring{$\GrCob$}{GrCob}}
In this section, we show that any extendable $\Ei$-Frobenius algebra admits a canonical symmetric monoidal extension to $\GrCob$. As a consequence, we deduce the universal property of $\GrCob$ stated in Theorem \ref{thm:A}, which says that $\GrCob$ is the initial extendable $\Ei$-Frobenius algebra. 
\subsection{Canonical extensions of \texorpdfstring{$\Ei$}{Einfty}-Frobenius algebras}
Given a symmetric monoidal $\infty$-category $\V$ and an $\Ei$-Frobenius algebra $\cA\colon\Gr\to\V$, it is natural to ask whether $\cA$ admits an extension to a ``graph field theory'', i.e.~a symmetric monoidal functor $F:\GrCob\to\V$ such that the restriction of $F$ to $\Gr$ is $\cA$.
A good candidate for this extension is the \emph{operadic left Kan extension} of $\cA$ along the inclusion $\iota:\Gr\to\GrCob$. We take this notion from \cite[Section 3.1]{HA}, and refer to \cite[Theorem 7.6]{BSZ} for an alternative account that specializes to the case of interest and provides in particular the necessary and sufficient conditions for the existence of an operadic left Kan extension.
\begin{definition}
\label{defn:operadiklKe}
An operadic left Kan extension of a lax symmetric monoidal functor $F:\C\to \V$ along a lax symmetric monoidal functor $i:\C\to\D$ is a pair $(G, \alpha)$ of a lax symmetric monoidal functor $G\colon \D \to \V$ and a symmetric monoidal natural transformation $\alpha\colon F \to G \circ i$ such that for all $v \in \V$, the transformation induced by $\alpha$ exhibits $G(-) \otimes v\colon \D \to \V$ as the plain left Kan extension of $F(-) \otimes v\colon \C \to \V$ along $i$. In particular, for $v$ the monoidal unit, the underlying functor of $G$ is exhibited as the
left Kan extension of $F$ along $i$. 
\end{definition}
\begin{notation}
In the notation of Definition \ref{defn:operadiklKe}, we usually denote by $i_!^\otimes F\simeq G\colon\D\to\V$ the operadic left Kan extension, which is essentially unique, if it exists. We reserve the notation $i_!F\colon \D\to\V$ for the (plain) left Kan extension of $F$ along $i$, whose definition does not use any (lax) symmetric monoidal structures.
\end{notation}
A necessary and sufficient condition to ensure that an operadic left Kan extension $i_!^\otimes F$ exists is that both of the following hold:
\begin{itemize}
\item the left Kan extension $i_!(F(-)\otimes v)$ exists for all $v\in\V$;
\item the assembly map $i_!(F(-)\otimes v)\to i_!F(-)\otimes v$ is a natural equivalence.
\end{itemize} 
\begin{remark}
In some situations, e.g. when $\V$ is cocomplete and its monoidal product preserves colimits separately in each variable, then \emph{all} lax symmetric monoidal functors $\C\to\V$ admit an operadic left Kan extension $\D\to\V$. In this case, the operadic left Kan extension provides a left adjoint to the restriction functor  $i^*:\Fun^{\otimes,\lax}(\D,\V)\to\Fun^{\otimes,\lax}(\C,\V)$: see \cite[Section 3.1]{HA}, or \cite[Theorem 1.1]{LKE} for an alternative exposition. In general, however, the notion of operadic left Kan extension is different.
\end{remark}
Since $\iota\colon\Gr\to\GrCob$ is fully faithful, the operadic left Kan extension $\iota_!\cA$ of a symmetric monoidal functor  $\cA\colon\Gr\to\V$, if it exists, restricts indeed to $\cA$ on $\Gr$. We will show in the following proposition that its lax symmetric monoidal structure is in fact a (strong) symmetric monoidal structure, and we give a simplified formula for its pointwise values. Then we will deduce Theorem \ref{thm:A} as an immediate corollary.

\begin{proposition}\label{prop:LKS-symmonoidal}
Let $(\V,\cA)\in\CFrobext$ as in Definition \ref{defn:CFrobext}, which implies that the operadic left Kan extension $i_!^\otimes \cA$ exists. Then the lax symmetric monoidal structure on $\iota^\otimes_!\cA$ is in fact a strong symmetric monoidal structure.
\end{proposition}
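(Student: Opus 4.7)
The plan is to first identify the underlying functor of $\iota_!^\otimes\cA$ with factorization homology and then use the extendability hypothesis to verify both strong monoidality constraints (the multiplication and the unit).

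First I would establish the pointwise formulas
\[
(\iota_!^\otimes\cA)(X)\simeq\int_X\cA,\qquad (\iota_!^\otimes(\cA\otimes v))(X)\simeq\int_X(\cA\otimes v),
\]
for every $X\in\GrCob$ and every $v\in\V$. The standard formula for the plain left Kan extension presents $(\iota_!\cA)(X)$ as the colimit of $\cA$ over the slice $\Gr\times_{\GrCob}\GrCob_{/X}$, so the task reduces to showing that the natural functor
\[
\Fin_{/X}\too \Gr\times_{\GrCob}\GrCob_{/X},
\]
induced by the inclusions $\Fin\hookrightarrow\Gr$ and $\Ss\hookrightarrow\GrCob$ of diagram \eqref{sq:Fin->Gr->GrCob}, is cofinal. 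Granting this cofinality, the extendability hypothesis then simultaneously ensures the existence of the operadic left Kan extension (by providing the required assembly equivalences) and upgrades the pointwise formula to one with arbitrary coefficients in $v$.

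Next I would check that the lax structure map on $\iota_!^\otimes\cA$ is pointwise an equivalence. Under the pointwise identification of Step~1, and using that the monoidal product in $\GrCob$ restricts on spaces to disjoint union $X\otimes Y=X\sqcup Y$, the lax structure map is identified with the natural comparison
\[
\int_X\cA\otimes\int_Y\cA\too\int_{X\sqcup Y}\cA.
\]
Using the canonical equivalence $\Fin_{/X\sqcup Y}\simeq\Fin_{/X}\times\Fin_{/Y}$ (a finite set mapping into a coproduct of spaces decomposes componentwise), the symmetric monoidality of $\cA$, Fubini for iterated colimits, and two applications of the extendability assembly equivalence (once with $v=\cA(A)$ and once with $v=\int_Y\cA$), I arrive at
\[
\int_{X\sqcup Y}\cA\simeq\colim_{(A,B)\in\Fin_{/X}\times\Fin_{/Y}}\cA(A)\otimes\cA(B)\simeq \int_X\cA\otimes\int_Y\cA.
\]
A small additional check — that this Fubini equivalence is indeed the lax structure map of $\iota_!^\otimes\cA$ — follows from the universal property of the operadic left Kan extension. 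The unit constraint is immediate: the monoidal unit of $\GrCob$ is the empty space $\emptyset$, and $\int_\emptyset\cA\simeq\cA(\emptyset)\simeq\mathbf{1}_\V$ since $(\emptyset\to\emptyset)$ is a terminal object of $\Fin_{/\emptyset}$ and $\cA$ is symmetric monoidal.

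I expect the main obstacle to be the cofinality statement in Step~1, since it requires an explicit understanding of the morphism spaces in $\GrCob$ with target a general space $X$, and is the $\GrCob$-analogue of a corresponding identification in \cite{BSZ} between the operadic left Kan extension along $\cO\hookrightarrow\OC$ and Hochschild-style factorization homology. Once the pointwise formula is in hand, the remaining steps reduce to unpacking the extendability hypothesis through Fubini and the symmetric monoidality of $\cA$.
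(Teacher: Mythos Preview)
Your proposal is correct and follows essentially the same strategy as the paper. Both proofs hinge on the cofinality of $\Fin_{/X}\to\Gr\times_{\GrCob}\GrCob_{/X}$ (which the paper establishes as Corollary~\ref{cor:final} via a computation of mapping spaces in $\GrCob$, exactly the obstacle you anticipate), and both reduce strong monoidality to the equivalence $\Fin_{/X}\times\Fin_{/Y}\simeq\Fin_{/X\sqcup Y}$. The only difference is packaging: the paper invokes a ready-made criterion \cite[Lemma~7.7]{BSZ} asserting that $\iota_!^\otimes\cA$ is strong monoidal once the product-of-slices functor
\[
(\Gr\times_{\GrCob}\GrCob_{/X})\times(\Gr\times_{\GrCob}\GrCob_{/Y})\too\Gr\times_{\GrCob}\GrCob_{/X\otimes Y}
\]
is final, whereas you unpack this condition directly via Fubini and two applications of the extendability assembly map. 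The paper's route sidesteps the ``small additional check'' that your Fubini equivalence really is the lax structure map, but your direct argument is equally valid.
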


We start by establishing some computational inputs to Proposition \ref{prop:LKS-symmonoidal}.
\begin{lemma}\label{lem: factorizationhomologyinGrCob}
For any space $X$ the canonical map
\[\int_X \iota:=\colim(\Fin_{/X}\to\Fin\hookrightarrow\Gr\xrightarrow{\iota}\GrCob)\too X
\]
is an equivalence in $\GrCob$; in particular the colimit defining the source exists.
\end{lemma}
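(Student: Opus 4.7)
By the commutativity of the square (\ref{sq:Fin->Gr->GrCob}), the composite $F := (\Fin_{/X} \to \Fin \hookrightarrow \Gr \xrightarrow{\iota} \GrCob)$ equivalently factors as $\Fin_{/X} \to \Fin \hookrightarrow \Ss \hookrightarrow \GrCob$. The tautological cocone in $\Ss$ with vertex $X$, exhibiting $X$ as $\colim_{\Fin_{/X}} A$ in $\Ss$ by the density of $\Fin$ in $\Ss$, thus includes into $\GrCob$ to give our candidate colimit cocone with vertex $X$.

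\textbf{Yoneda and analysis of mapping spaces.} To verify universality, it suffices to show that for every $Y \in \GrCob$ the restriction map
\[
\map_{\GrCob}(X, Y) \too \lim_{(A \to X)^\op} \map_{\GrCob}(A, Y)
\]
is an equivalence. We unwind $\GrCob$ as the localization of $\Gr_\Ss$ at idle morphisms: a morphism $Z \to Y$ in $\GrCob$ is represented up to idle equivalence by a cospan $Z \to W \ot Y$ whose right leg $Y \to W$ carries a ``graph cobordism structure'' (arising as the pushout of a graph cobordism in $\Gr$ along a finite-set map $B \to Y$). The crucial observation is that the target-side data $(W, Y \to W, \text{graph structure})$ is independent of $Z$, while the source-side data is exactly the map $Z \to W$ in $\Ss$. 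This description exhibits $\map_{\GrCob}(-, Y)|_{\Ss^\op}\colon \Ss^\op \to \Ss$ as assembled from the representables $\map_\Ss(-, W)$ via a source-independent indexing by the groupoid of graph structures on $Y$.

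\textbf{Reduction and main obstacle.} Since each representable $\map_\Ss(-, W)$ converts colimits in $\Ss$ to limits in $\Ss$, after interchanging the limit $\lim_{(A \to X)^\op}$ with the source-independent indexing of graph structures the claim reduces to
\[
\map_\Ss(X, W) \simeq \lim_{(A \to X)^\op} \map_\Ss(A, W) \quad \text{for every } W \in \Ss,
\]
which is precisely the density of $\Fin$ in $\Ss$, i.e.~$X \simeq \colim_{\Fin_{/X}} A$ in $\Ss$. The main technical hurdle is making precise the above ``colimit-of-representables'' description of $\map_{\GrCob}(-, Y)|_\Ss$ and legitimately interchanging the $\Fin_{/X}^\op$-indexed limit with the indexing by graph structures: this requires a careful analysis of the idle localization $\Gr_\Ss \to \GrCob$ and the specific cospan/pushout structure of its mapping spaces. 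Once this interchange is established, the conclusion---including the existence of the colimit---follows from Yoneda together with the density of $\Fin$ in $\Ss$.
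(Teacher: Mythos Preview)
Your approach is essentially that of the paper. The paper resolves the ``main technical hurdle'' you flag by noting that both $\GrCob(X,Y)$ and $\lim_{(\Fin_{/X})^\op}\GrCob(A,Y)$ map compatibly to $\GrCob(\emptyset,Y)$ (using that $\emptyset$ is initial in $\Ss\subset\GrCob$), so it suffices to compare fibres over each $w\in\GrCob(\emptyset,Y)$; such a $w$ encodes precisely the target-side graph-structure datum $Y\to W$, and \cite[Remark~7.12]{Bianchi:stringtopology} identifies $\fib_w\big(\GrCob(Z,Y)\to\GrCob(\emptyset,Y)\big)\simeq\map(Z,W)$ for any $Z\in\Ss$, after which your reduction to the density of $\Fin$ in $\Ss$ goes through verbatim (the limit--fibre interchange being automatic).
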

\begin{proof}
For $Y\in\GrCob$ we want to show that $\GrCob(X,Y)\xrightarrow{\simeq}\lim_{A\in(\Fin_{/X})^\op}\GrCob(A,Y)$ is an equivalence of spaces. Both spaces map to $\GrCob(\emptyset,Y)$, using that $\emptyset$ is initial in $\Ss$ and regarding $\Ss$ as a wide $\infty$-subcategory of $\GrCob$ as in \cite[Corollary 7.10]{Bianchi:stringtopology}; therefore it suffices to prove, for all $w\in\GrCob(\emptyset,Y)$, that the induced map between fibres at $w$ is an equivalence. The datum of $w$ is roughly that of a map of spaces $Y\to W$, plus some information about how $W$ is obtained from $Y$ by attaching a graph. By \cite[Remark 7.12]{Bianchi:stringtopology} we have an equivalence as follows, induced by the functor denoted ``$D_0$'' in loc.cit.: 
\[
\fib_w(\GrCob(X,Y)\to\GrCob(\emptyset,Y))\xrightarrow{\simeq}\map(X,W);
\]
We similarly have an equivalence as follows, suitably compatible with the one above:
\[
\begin{split}
\fib_w\left(\lim_{A\in(\Fin_{/X})^\op}\GrCob(A,Y)\to\GrCob(\emptyset,Y)\right)&\simeq\lim_{A\in(\Fin_{/X})^\op}\fib_w(\GrCob(A,Y)\to\GrCob(\emptyset,Y))\\
&\xrightarrow{\simeq}\lim_{A\in(\Fin_{/X)^\op}}\map(A,W)\simeq\map(X,W),
\end{split}
\]
where we use that $X\simeq\colim_{A\in\Fin_{/X}}A$ in spaces.
\end{proof}
\begin{corollary}\label{cor: factorizationhomologyinGrCob}
For any space $X$ and any finite set $A$ the canonical map
\[
\int_X(\iota\otimes A)\to X\otimes A
\]
is an equivalence in $\GrCob$; in particular the colimit defining the source exists.
\end{corollary}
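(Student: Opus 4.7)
The plan is to deduce the corollary from Lemma~\ref{lem: factorizationhomologyinGrCob} via a cofinality argument.

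Since $\iota\colon\Gr\hookrightarrow\GrCob$ is symmetric monoidal, the restriction of $\iota\otimes A\colon\Gr\to\GrCob$ to $\Fin\subset\Gr$ is the functor $B\mapsto B\otimes A$. Hence
\[
\int_X(\iota\otimes A)\;=\;\colim_{B\in\Fin_{/X}}(B\otimes A)\quad\text{in }\GrCob,
\]
and the canonical map to $X\otimes A$ is assembled from the cocone maps $f\otimes\id_A\colon B\otimes A\to X\otimes A$ for $f\colon B\to X$ in $\Fin_{/X}$.

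The strategy is to introduce the functor
\[
G\colon\Fin_{/X}\too\Fin_{/X\sqcup A},\quad(f\colon B\to X)\mapsto(f\sqcup\id_A\colon B\sqcup A\to X\sqcup A),
\]
and to show that $G$ is final. Granting this, and postcomposing with the tautological functor $\Fin_{/X\sqcup A}\to\GrCob$ considered in Lemma~\ref{lem: factorizationhomologyinGrCob}, finality combined with that lemma applied to the space $X\sqcup A$ yields
\[
\colim_{B\in\Fin_{/X}}(B\otimes A)\;\simeq\;\colim_{C\in\Fin_{/X\sqcup A}}C\;\simeq\;X\otimes A,
\]
and one checks that this equivalence realises the canonical map in question.

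To prove finality, fix $(g\colon C\to X\sqcup A)\in\Fin_{/X\sqcup A}$ and analyse the comma $\infty$-category $\Fin_{/X}\times_{\Fin_{/X\sqcup A}}(\Fin_{/X\sqcup A})_{g/}$. By the extensivity of $\Fin$ inside $\Ss$ (applied to maps from a finite set into a coproduct), $C$ splits canonically as $C=C_X\sqcup C_A$ with $g=g_X\sqcup g_A$. An object of the comma category is then a pair $(f\colon B\to X,\ \alpha\colon g\to G(f))$, where $\alpha\colon C\to B\sqcup A$ is a map over $X\sqcup A$; compatibility with the projections forces $\alpha|_{C_A}=g_A$ and reduces $\alpha$ to a map $\alpha_X\colon C_X\to B$ over $g_X\colon C_X\to X$. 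The comma category therefore identifies with $(\Fin_{/X})_{(C_X,\,g_X)/}$, which has the initial object $\id_{C_X}$ and is in particular weakly contractible. The only (mild) obstacle is this extensivity-based bookkeeping; everything else is formal.
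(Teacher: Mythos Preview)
Your proof is correct and follows essentially the same route as the paper: factor through $\int_{X\sqcup A}\iota$ via the functor $-\sqcup A\colon\Fin_{/X}\to\Fin_{/X\sqcup A}$, invoke Lemma~\ref{lem: factorizationhomologyinGrCob} for the space $X\sqcup A$, and use finality of $-\sqcup A$. The paper simply asserts this finality, while you supply the verification via Quillen's Theorem~A; your comma-category computation is correct and amounts to the product decomposition $\Fin_{/X\sqcup A}\simeq\Fin_{/X}\times\Fin_{/A}$ together with the observation that $\id_A$ is terminal in $\Fin_{/A}$.
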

\begin{proof}
We may factor the map as the following composite:
\[
\int_X(\iota\otimes A)\to\int_{X\sqcup A}\iota\to X\otimes A.
\]
The second arrow is an equivalence by Lemma \ref{lem: factorizationhomologyinGrCob}, and in particular the middle colimit exists.
The first arrow is an equivalence, and in particular this also implies that the left colimit exists, because the functor $-\sqcup A\colon\Fin_{/X}\to\Fin_{/X\sqcup A}$ is final.
\end{proof}

\begin{lemma}\label{lem:colimhom}
    The canonical map \[\colim_{B\in\Fin_{/X}}\GrCob(A,B)\to\GrCob(A,X)\] is an equivalence for any space $X\in\GrCob$ and finite set $A\in\Gr$.
\end{lemma}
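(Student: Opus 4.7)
The plan is to verify the map fiberwise, using the natural fibration $\GrCob(A, Y) \to \GrCob(\emptyset, Y)$ from \cite[Remark 7.12]{Bianchi:stringtopology} whose fiber over $w\colon Y \to W_w$ is $\map(A, W_w)$, simplifying to $W_w^{|A|}$ for finite $A$. Naturality in $Y$ and assembly into colimits yield a commutative square
\[
\begin{tikzcd}
\colim_{B \in \Fin_{/X}} \GrCob(A, B) \ar[r] \ar[d] & \GrCob(A, X) \ar[d] \\
\colim_{B \in \Fin_{/X}} \GrCob(\emptyset, B) \ar[r] & \GrCob(\emptyset, X).
\end{tikzcd}
\]
The goal reduces to showing that the bottom arrow is an equivalence (the $A = \emptyset$ case of the lemma) and that the induced map on fibers over each $w \in \GrCob(\emptyset, X)$ is an equivalence.

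First I would prove the $A = \emptyset$ case. Unpacking the definition of $\GrCob$ as $\Gr_\Ss$ localized at idle morphisms, both spaces classify ``graph attachments'' to $X$, namely tuples $(B_0, G, B_0 \to X, B_0 \to G)$ with $B_0 \in \Fin$ and $G$ a graph, up to idle identifications. The left-hand side differs from the right only by recording an extra factorization $B_0 \to B \to X$ through some $B \in \Fin_{/X}$. For fixed $(B_0, G, B_0 \to X, B_0 \to G)$, the category of such factorizations has initial object $B = B_0$, hence is weakly contractible, so the forgetful map from LHS to RHS is a trivial fibration.

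For general finite $A$, fix $w\colon X \to W \in \GrCob(\emptyset, X)$ with $W = X \sqcup_{B_0} G$, and compare fibers over $w$. The RHS fiber is $W^{|A|}$. The LHS fiber, obtained via the Grothendieck construction and the same factorization category, is $\colim_{B_0 \to B \to X} (B \sqcup_{B_0} G)^{|A|}$. This indexing category is sifted --- any two factorizations through $B_1, B_2$ admit the common refinement $B_1 \sqcup_{B_0} B_2$ --- so finite products in $\Ss$ commute with its colimit, reducing the fiber to $(\colim_{B_0 \to B \to X}(B \sqcup_{B_0} G))^{|A|}$. Since $(-) \sqcup_{B_0} G$ preserves colimits and the inclusion of the factorization category into $\Fin_{/X}$ is cofinal (any $B' \in \Fin_{/X}$ admits a map to $B_0 \sqcup B'$), Lemma \ref{lem: factorizationhomologyinGrCob} yields $\colim (B \sqcup_{B_0} G) \simeq X \sqcup_{B_0} G = W$, so the LHS fiber matches the RHS fiber.

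The main obstacle I expect is the careful unraveling of the $A = \emptyset$ case: translating the localization $\Gr_\Ss \to \GrCob$ into the concrete moduli description of $\GrCob(\emptyset, -)$ and verifying the contractible-fibers claim at the $\infty$-categorical level. Once this moduli-theoretic description is secured, the general case follows by the formal sifted-colimit commutation and cofinality arguments above.
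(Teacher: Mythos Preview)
Your overall strategy---reduce to $A=\emptyset$ via the fibration $\GrCob(A,-)\to\GrCob(\emptyset,-)$ and then compare fibers---is different from the paper's direct chain of equivalences through $\Gr_\Ss$, and the $A=\emptyset$ step as well as the identification of the two fibers look fine. However, the fiber comparison contains a genuine gap.

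You claim that the indexing category $(\Fin_{/X})_{B_0/}$ of factorizations $B_0\to B\to X$ is sifted, because ``any two factorizations through $B_1,B_2$ admit the common refinement $B_1\sqcup_{B_0}B_2$''. This is false: the pushout $B_1\sqcup_{B_0}B_2$ is formed in $\Ss$, and it need not be a finite set once $B_0\to B_i$ fails to be injective. Concretely, take $X=S^1$, $B_0=\{a,b,c,d\}$, $B_1=B_2=\{1,2\}$, with $B_0\to B_1$ sending $a,b\mapsto 1$, $c,d\mapsto 2$ and $B_0\to B_2$ sending $a,c\mapsto 1$, $b,d\mapsto 2$. Then $B_1\sqcup_{B_0}B_2\simeq S^1$, and by choosing the structure maps to $X=S^1$ (and the requisite homotopies) so that the induced map $S^1\to S^1$ has degree~$1$, the category of finite $B_3$ under both $B_1$ and $B_2$ over $X$ is \emph{empty}: any map $S^1\to B_3$ with $B_3$ finite is nullhomotopic, so the composite $S^1\to B_3\to X$ cannot be homotopic to the degree-$1$ map. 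Hence the diagonal $(\Fin_{/X})_{B_0/}\to(\Fin_{/X})_{B_0/}^2$ is not final, and you cannot pull the $|A|$-fold product out of the colimit this way.

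The conclusion you want, $\colim_{B_0\to B\to X}(B\sqcup_{B_0}G)^{|A|}\simeq W^{|A|}$, is true (it follows a posteriori from the lemma), but your argument does not establish it. One could try to salvage the approach by restricting to the cofinal subcategory where $B_0\hookrightarrow B$ is a monomorphism---there the pushouts \emph{are} finite sets---but the cofinality of that inclusion requires its own argument. The paper sidesteps all of this by manipulating the colimit formula $\GrCob(A,B)\simeq\colim_{C\in\Fin_{/B}}\Gr_\Ss(\emptyset\to A,C\to B)$ directly: the key move is that for $B$ finite the inner colimit has a terminal index $C=B$, collapsing the double colimit to a single one over $\Fin_{/X}$, after which one only needs equivalences of individual $\Gr_\Ss$ hom-spaces rather than any siftedness.
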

\begin{proof}
The above canonical map can be factored as a sequence of equivalences
\begin{align*}
\colim_{B\in\Fin_{/X}}\GrCob(A,B)&\simeq\colim_{B\in\Fin_{/X}}\big(\colim_{C\in\Fin_{/B}}\Gr_\Ss(\emptyset\to A,C\to B)\big)\\
&\xleftarrow{\simeq}\colim_{B\in\Fin_{/X}}\Gr_\Ss(\emptyset\to A,B=B)\\
&\xleftarrow{\simeq}\colim_{B\in\Fin_{/X}}\Gr_\Ss(A=A,B=B)\\
&\xrightarrow{\simeq}\colim_{B\in\Fin_{/X}}\Gr_\Ss(A=A,B\to X)\\
&\xrightarrow{\simeq}\colim_{B\in\Fin_{/X}}\Gr_\Ss(\emptyset\to A,B\to X)\simeq\GrCob(A,X).
\end{align*}
In the above sequence of equivalences we make use of the $\infty$-category $\Gr_\Ss$ from \cite{Bianchi:stringtopology}, of which $\GrCob$ is a localisation; we invite the reader to see loc.cit. for further details.
The first and last equivalence follow from the formula for morphism spaces in $\GrCob$ from \cite[Corollary 7.8]{Bianchi:stringtopology}; the second from the fact that, since $B$ is a finite set, the identity of $B$ is terminal in $\Fin_{/B}$; the third and the fourth equivalences follow from the equivalences of morphism spaces 
\[
\Gr_\Ss(\emptyset\to A,B=B)\xleftarrow{\simeq}\Gr_\Ss(A=A,B=B)\xrightarrow{\simeq}\Gr_\Ss(A=A,B\to X),
\]
induced by precomposition by $(\emptyset\to A)\to(A=A)$ and postcomposition by $(B=B)\to(B\to X)$, respectively; it is immediate to check from the definition of $\Gr_\Ss$ that these are indeed equivalences; finally, the fifth equivalence, induced by precomposition by $(\emptyset\to A)\to(A=A)$, follows from \cite[Proposition 7.4]{Bianchi:stringtopology}, i.e.~the fact that the localisation $\GrCob$ of $\Gr_\Ss$ can be computed by left calculi of fractions.
\end{proof}

\begin{corollary}
\label{cor:final}
Let $X$ be a space. Then the following functor is final:
\[
\Phi:\Fin_{/X}=\Fin\times_{\Ss}\Ss_{/X}\to \Gr\times_{\GrCob}\GrCob_{/X}.
\]
\end{corollary}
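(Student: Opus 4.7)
The plan is to apply Joyal--Lurie's $\infty$-categorical version of Quillen's Theorem A: the functor $\Phi$ is final if and only if, for every object $(A,f)\in\mathcal{D}:=\Gr\times_{\GrCob}\GrCob_{/X}$, the comma $\infty$-category
\[
\mathcal{E}_{(A,f)}:=\Fin_{/X}\times_{\mathcal{D}}\mathcal{D}_{(A,f)/}
\]
has contractible classifying space. The expectation is that this reduces essentially immediately to Lemma \ref{lem:colimhom}; the only real work is unpacking the slice in terms that make the connection transparent.

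First I would observe that $\mathcal{E}_{(A,f)}\to\Fin_{/X}$ is a left fibration, being a pullback of the left fibration $\mathcal{D}_{(A,f)/}\to\mathcal{D}$ along $\Phi$. Its fiber over $(B,g)$ is the mapping space $\mathcal{D}((A,f),\Phi(B,g))=\mathcal{D}((A,f),(B,g))$. Using the formula for mapping spaces in a pullback of $\infty$-categories, and the fact that the fully faithful inclusion $\iota\colon\Gr\hookrightarrow\GrCob$ induces an equivalence $\Gr(A,B)\xrightarrow{\simeq}\GrCob(A,B)$, this mapping space simplifies to
\[
\mathcal{D}((A,f),(B,g))\simeq\Gr(A,B)\times_{\GrCob(A,X)}\{f\},
\]
where the map $\Gr(A,B)\to\GrCob(A,X)$ is postcomposition by $g\colon B\to X$ (viewed as a morphism in $\GrCob$ via the inclusion $\Ss\hookrightarrow\GrCob$).

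By the straightening equivalence, the classifying space of the left fibration is the colimit of the straightened diagram, so
\[
|\mathcal{E}_{(A,f)}|\simeq\colim_{(B,g)\in\Fin_{/X}}\bigl(\Gr(A,B)\times_{\GrCob(A,X)}\{f\}\bigr).
\]
Since $\Ss$ is an $\infty$-topos, colimits are universal: the fiber over $\{f\}\to\GrCob(A,X)$ commutes with the colimit. Combined with Lemma \ref{lem:colimhom}, which identifies $\colim_{(B,g)\in\Fin_{/X}}\Gr(A,B)\xrightarrow{\simeq}\GrCob(A,X)$ via precisely the map being fibered over, we conclude
\[
|\mathcal{E}_{(A,f)}|\simeq\GrCob(A,X)\times_{\GrCob(A,X)}\{f\}\simeq *.
\]

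No serious obstacle is anticipated: the proof is a direct application of Lemma \ref{lem:colimhom} together with standard $\infty$-categorical machinery (mapping spaces in a pullback, classifying space of a left fibration as the colimit of fibers, and universality of colimits in $\Ss$). The only mildly delicate step is the identification of the mapping space $\mathcal{D}((A,f),(B,g))$, where care is needed to use full faithfulness of $\iota$ to reduce the pullback formula to the stated fiber.
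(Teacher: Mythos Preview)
Your proof is correct and follows essentially the same route as the paper: both reduce the finality of $\Phi$ to Lemma~\ref{lem:colimhom}. The paper's version is more compressed because it invokes an external lemma (\cite[Lemma~7.1]{BSZ}) which packages exactly the reduction you carry out by hand---namely, that a functor into the unstraightening of a presheaf $P$ is final if and only if the induced map from the colimit of representables to $P$ is an equivalence. Your argument via Quillen's Theorem~A, the identification of mapping spaces in the pullback $\mathcal{D}$ using full faithfulness of $\iota$, and universality of colimits in $\Ss$ is precisely the content of that lemma specialized to this situation.
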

In defining $\Phi$ we are using that $\Ss$ is a wide $\infty$-subcategory of $\GrCob$, and we are identifying $\Fin$ with the intersection $\Gr\cap\Ss$ of $\infty$-subcategories of $\GrCob$. We observe that $\Phi$ is a full $\infty$-subcategory inclusion, since the wide $\infty$-subcategory $\Ss\subset\GrCob$ satisfies the 2-of-3 property. Namely, $\Ss$ can be characterised as the wide $\infty$-subcategory of $\GrCob$ spanned by graph cobordisms $X\to W\xleftarrow{\simeq}Y$ whose second arrow is an equivalence \cite[Corollary 7.10]{Bianchi:stringtopology}.
\begin{proof}
    The source of $\Phi$ is the unstraightening of the restricted Yoneda functor $\yo_{\Gr}^\circ:\GrCob\to\Psh(\GrCob)\to\Psh(\Gr)$ at $X$. Therefore, by \cite[Lemma 7.1]{BSZ}, to show that $\Phi$ is final, it suffices to show that the canonical map $\colim_{B\in\Fin_{/X}}\yo_{\Gr}(B)\to\yo_{\Gr}^\circ(X)$ is an equivalence. This is indeed true by Lemma \ref{lem:colimhom} above.
\end{proof}

Now we are ready to prove Proposition \ref{prop:LKS-symmonoidal}

\begin{proof}[Proof of Proposition \ref{prop:LKS-symmonoidal}]
The conditions on $\V$ allows us to apply \cite[Theorem 7.6]{BSZ} to deduce that the operadic left Kan extension $\iota^{\otimes}_!\cA$ along the inclusion $\iota:\Gr\to\GrCob$ exits. To show that this lax symmetric monoidal functor is in fact symmetric monoidal, it suffices to check the condition in \cite[Lemma 7.7]{BSZ}, namely the canonical functor
\[
(\Gr\times_{\GrCob}\GrCob_{/X})\times (\Gr\times_{\GrCob}\GrCob_{/Y})\to\Gr\times_{\GrCob}\GrCob_{/X\otimes Y}
\]
is final for any spaces $X,Y\in\GrCob$. This follows from Corollary \ref{cor:final}, the fact that the inclusion $\Fin\to\GrCob$ is symmetric monoidal, and the fact that the canonical functor $\Fin_{/X}\times\Fin_{/Y}\to\Fin_{/X\sqcup Y}$ is an equivalence for any spaces $X,Y$.
\end{proof}

\subsection{Proof of Theorem \ref{thm:A}}
As an immediate consequence of Proposition \ref{prop:LKS-symmonoidal}, we deduce Theorem \ref{thm:A}, which says that $(\GrCob,\iota)$ is extendable and it is initial among all extendable $\Ei$-Frobenius algebras.

\begin{proof}[Proof of Theorem \ref{thm:A}]
We first check that $(\GrCob,\iota)$ is extendable, i.e. it satisfies the conditions on $(\V,\cA)$ in Definition \ref{defn:CFrobext}.
Given Lemma \ref{lem: factorizationhomologyinGrCob}, it remains to show that $\int_X(\iota\otimes Y)\xrightarrow{\simeq}(\int_X\iota)\otimes Y$ for all spaces $X$ and $Y$. Using Lemma \ref{lem: factorizationhomologyinGrCob} and Corollary \ref{cor: factorizationhomologyinGrCob}, the fact that $\iota$ attains finite sets as values, the equivalence $\Fin_{/X\sqcup Y}\simeq\Fin_{/X}\times\Fin_{/Y}$, and the fact that the inclusion $\Ss\hookrightarrow\GrCob$ is symmetric monoidal, we obtain the desired chain of equivalences
\[
\int_X(\iota\otimes Y)\simeq\int_X\left(\iota\otimes\int_Y\iota\right)\simeq\int_{X\sqcup Y}\iota\simeq X\otimes Y\simeq \left(\int_X\iota\right)\otimes Y.
\]
The fact that $(\GrCob,\iota)\in\CFrobext$ is an initial object now follows from observing that a morphism $F\colon(\GrCob,\iota)\to(\V,\cA)$ in $\CFrobext$ is by Corollary \ref{cor:final} precisely the datum of a strong symmetric monoidal operadic left Kan extension of $\cA$ along $\iota$; the hypotheses on $(\V,\cA)$ now imply that the operadic left Kan extension exists, and Proposition \ref{prop:LKS-symmonoidal} implies that it is also strong symmetric monoidal.
\end{proof}
\begin{remark}\label{rem:canmapeq}
As already mentioned, the underlying functor of the operadic left Kan extension $\iota_!^\otimes\cA\colon\GrCob\to\V$ is the plain left Kan extension $\iota_!\cA\colon\GrCob\to\V$. Thanks to Corollary \ref{cor:final}, this can be characterised as the essentially unique functor $F$ equipped with an equivalence $\iota^*F\simeq \cA$ such that the canonical map 
\begin{equation}\label{eq:pointwise}
\int_X \cA =\colim(\Fin_{/X}\to\Fin\hookrightarrow\Gr\xrightarrow{\cA}\V)\to F(X)
\end{equation}
is an equivalence in $\V$ for any space $X$. 
\end{remark}

\begin{proof}[Proof of Corollary \ref{cor:A}]
The essentially unique symmetric monoidal extension $\int_{(-)}\cA$ is the operadic left Kan extension $\iota_!^\otimes\cA$, which by Proposition \ref{prop:LKS-symmonoidal} is symmetric monoidal. The underlying functor of $\iota_!^\otimes\cA$ is $\iota_!\cA$, and by Corollary \ref{cor:final} for $X\in\GrCob$ we indeed have an equivalence $\int_X\cA\xrightarrow{\simeq}\iota_!\cA(X)$.
\end{proof}

\begin{remark}
Proposition \ref{prop:LKS-symmonoidal} and hence in Theorem \ref{thm:A} still holds if we slightly relax the assumption on $(\V,\cA)\in\CFrobext$ by replacing the second point in Definition \ref{defn:CFrobext} with the following:
for all $X,Y\in\Ss$ the assembly map 
 \[
 \int_{X\sqcup Y}\cA\simeq\colim_{\Fin_{/X}\times\Fin_{/Y}}\cA\otimes\cA\to(\colim_{\Fin_{/X}}\cA)\otimes(\colim_{\Fin_{/Y}}\cA)\simeq\int_X\cA\otimes\int_Y\cA.\]
 is an equivalence.
First we observe that the left Kan extension $\iota_!\cA:\Gr\to\V$  has essential image in the full $\infty$-subcategory $\V'\subseteq\V$ spanned by the objects $\int_X\cA$ for all spaces $X\in\Ss$. The assumption on $\V$ implies indeed that $\V'$ is closed under the monoidal product on $\V$. Furthermore, considering $\cA$ as an $\Ei$-Frobenius algebra in $\V'$, the pair $(\V',\cA)$ satisfies the condition of Proposition \ref{prop:LKS-symmonoidal} and hence we obtain a symmetric monoidal extension $\GrCob\to\V'$. Postcomposing with the symmetric monoidal inclusion $\V'\to\V$ yields a symmetric monoidal extension $\GrCob\to\V$ of $\cA$. However, this construction is in general not the \emph{operadic} left Kan extension of $\cA$, which may not even exist.
\end{remark}

\section{Denseness and Natural operations}
In this section we prove Theorem \ref{thm:B}, which says that $\Gr$ is dense in $\GrCob$ and characterizes $\GrCob$ as the extension of $\Gr$ obtained by formally adjoining all colimits
$\int_{X}\yo_{\Gr}(\iota)$ to $\Gr$. 
As an application, we determine the space of universal natural operations on the factorization homologies of $\Ei$-Frobenius algebras, which we then compare with the natural operations on the factorization homology of $\E_1$-Frobenius algebras from \cite{BSZ}. 

\subsection{Denseness of \texorpdfstring{$\iota:\Gr\to\GrCob$}{iota:Gr->GrCob}}
We start by recalling the notion of denseness.
\begin{definition}\cite[03VD]{Kerodon}
Let $i:\D\to\C$ be the inclusion of a full $\infty$-subcategory. We say that $\D$ is \textit{dense} in $\C$ if for every $X\in\C$, the following is a colimit diagram 
\[
(\D\times_\C \C_{/X})^\rhd \to \C_{/X} \to \C.
\]
\end{definition}
The following lemma provides two alternative characterizations of denseness.
\begin{lemma}
\label{lem:denseness}
    A full inclusion $i\colon \D\to\C$ is dense if and only if the following two equivalent conditions are satisfied:
\begin{enumerate}
\item \cite[03VE]{Kerodon} The (plain) left Kan extension of $i$ along $i$ is the identity of $\C$. In particular, the left Kan extension exists.
\item  \cite[03VG]{Kerodon}
The following restricted Yoneda embedding is fully faithful:
\[  \yo^\circ_{\D}: \C\xlongrightarrow{\yo_{\C}}\Psh(\C)\xlongrightarrow{i^*}\Psh(\D).
\]
\end{enumerate}
\end{lemma}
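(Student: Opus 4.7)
The plan is to establish the chain of equivalences denseness $\Leftrightarrow$ (1) $\Leftrightarrow$ (2); since Kerodon provides detailed references, my proof would amount to a concise sketch suitable for the reader's convenience.

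For denseness $\Leftrightarrow$ (1): by the pointwise formula, $(i_!i)(X)$ is the colimit of the composite $\D\times_\C \C_{/X}\to\C_{/X}\to\C$ whenever this colimit exists. The tautological identification $\mathrm{id}_\C\circ i\simeq i$ induces, via the universal property of left Kan extension, a canonical natural transformation $i_!i\to\mathrm{id}_\C$, which at $X$ recovers the cone to $X$ assembled from the source functor $\C_{/X}\to\C$. Thus condition (1)---that $i_!i$ exists and agrees with $\mathrm{id}_\C$ via this transformation---is exactly the assertion of denseness.

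For (1) $\Leftrightarrow$ (2): I would invoke the identification of $\infty$-categories
\[
\D\times_{\Psh(\D)}\Psh(\D)_{/\yo_\D^\circ(X)}\simeq \D\times_\C \C_{/X},
\]
which holds since both sides unstraighten the presheaf $D\mapsto \C(i(D),X)$ on $\D$. By the co-Yoneda lemma this rewrites $\yo_\D^\circ(X)$ as $\colim_{(D,iD\to X)}\yo_\D(D)$ in $\Psh(\D)$, and hence
\[
\Psh(\D)(\yo_\D^\circ X,\yo_\D^\circ Y)\simeq \lim_{(D,iD\to X)}\C(iD,Y).
\]
Fully faithfulness of $\yo_\D^\circ$ thus amounts to saying that $\C(-,Y)$ carries the canonical cone $(iD\to X)$ to a limit cone for every $Y\in\C$, which by the Yoneda lemma in $\C$ is equivalent to this cone being a colimit cone on $X$---precisely condition (1).

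The main obstacle is the slice-category identification above and the bookkeeping needed to match up the canonical cones on either side; these are $\infty$-categorically routine, involving only straightening/unstraightening, but should be spelled out carefully. Once these are in place, the equivalences of the three conditions reduce to the universal properties of colimits and the Yoneda lemma.
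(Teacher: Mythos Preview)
Your sketch is correct and follows the standard argument. Note, however, that the paper does not supply its own proof of this lemma: the two conditions are each tagged with a Kerodon reference ([03VE] and [03VG]) in the statement itself, and the lemma is stated without a proof environment. In other words, the paper treats this as a recollection from the literature rather than something to be proved. Your proposal is thus more than what the paper provides; if you wish to include it, it would serve as a self-contained explanation for the reader's convenience, but it is not needed to match the paper.
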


To prove Theorem \ref{thm:B}, we will use the first characterization to check denseness of $\Gr$ in $\GrCob$ and the second one to characterize $\GrCob$ as formally adjoining certain colimits to $\Gr$.

\begin{proof}[Proof of Theorem \ref{thm:B}]
By Lemma \ref{lem:denseness}.(1), it suffices to check that the left Kan extension $\iota_!\iota$ of the inclusion $\iota:\Gr\to\GrCob$ along $\iota$ is the identity on $\GrCob$.  It follows from  Lemma \ref{lem: factorizationhomologyinGrCob} and Corollary \ref{cor:final} that this left Kan extension indeed exists.  Since the identity $\id:\GrCob\to\GrCob$ restricts to the inclusion $\iota:\Gr\to\GrCob$, there is a natural transformation $\iota_!\iota\Rightarrow\id$. Furthermore, for any $X\in\GrCob$, this natural transformation evaluates to the canonical map
\[
\iota_!\iota(X)=\colim_{B\in\Gr\times_{\GrCob}\GrCob_{/X}}B=\colim_{B\in\Fin\times_{\Ss}\Ss_{/X}}B\xrightarrow{\simeq} X.
    \]
 which again by Lemma \ref{lem: factorizationhomologyinGrCob} and Corollary \ref{cor:final} is an equivalence; we thus have a natural isomorphism $\iota_!\iota\simeq\id$ of endofunctors of $\GrCob$ as desired.

  As a result, the restricted Yoneda embedding $\yo^\circ_{\Gr}:\GrCob\to\Psh(\Gr)$ is fully faithful by Lemma \ref{lem:denseness}(2). Since it restricts to $\yo_{\Gr}$ on the full $\infty$-subcategory $\Gr$, we obtain a natural transformation $\iota_!\yo_{\Gr}\Rightarrow \yo^{\circ}_{\Gr}$. This evaluates at any space $X\in\GrCob$ to the canonical map $\colim_{B
\in\Fin_{/X}}\yo^\circ_{\Gr}(B)\to \yo_{\GrCob}^\circ(X)$, which is an equivalence  by Lemma \ref{lem:colimhom}; we thus have a natural isomorphism $\iota_!\yo_\Gr\simeq\yo^\circ_\Gr$ as desired. The identification of the essential image of $\iota_!\yo_\Gr$ follows from Corollary \ref{cor:final}.
\end{proof}

\subsection{Natural operations on factorization homology of \texorpdfstring{$\Ei$}{Einfty}-Frobenius algebras}

Now we turn to the study of natural operations between factorization homologies of an $\Ei$-Frobenius algebra. 

\begin{definition}
\label{defn:Nat}
A \emph{context for natural operations} is a span of symmetric monoidal $\infty$-catego\-ries $\cE\ot\cF\to\cG$, with $\cF$ being a small $\infty$-category.
Given a context for natural operations as above and a presentably symmetric monoidal $\infty$-category $\V$, for any object $X\in\cG$ we let $I^{\cE}_X$ be the colimit preserving and $\V$-linear functor given by the composite
\[
I^{\cE}_X\colon\Fun(\cE,\V)\to\Fun(\cF,\V)\to\Fun(\cF\times_\cG\cG_{/X},\V)\xrightarrow{\colim}\V.
\] We define the $\infty$-category of natural operations of $\cF$-algebras in $\V$ with structures in $\cE$ and extensions to $\cG$
\[
\Nat_\cF[\cE,\cG;\V]\subseteq\Fun^{L,\V}(\Fun(\cE,\V),\V)\simeq\Psh(\cE;\V)
\]
as the full $\infty$-subcategory spanned by  $I^{\cE}_X$ 
as $X$ ranges over the objects of $\cG$.
When $\V=\Ss$ we omit it from the notation.
\end{definition}
\begin{example}
\label{ex:contexts}
The main context for natural operations we will consider are the spans $\cO\ot\Disk_1\to\Mfld_1$ and $\Gr\ot\Fin\to\Ss$. The restrictions of the functor $\epsilon:\OC\to\GrCob$ given in (\ref{sq:cube}) provide a comparison map between these two contexts for natural operations. 
\end{example}
The notion of natural operations was introduced by Wahl in \cite{wahl2016universal} in the setting $\V=D(\Z)$, $\cF\simeq\Disk_1$ and $\cG=\Mfld_1$, with arbitrary choice of $\cE$. In \cite{BSZ}, the situation was ``lifted'' to the setting $\V=\Ss$, the initial presentably symmetric monoidal $\infty$-category, where the focus is on $\cE=\cO$. In favourable cases including the two considered in Example \ref{ex:contexts} and Theorem \ref{thm:C}, which we will demonstrate, the $\infty$-category $\Nat_\cF[\cE,\cG;\V]$ happens to be a \emph{symmetric monoidal} $\infty$-subcategory of $\Psh(\cE;\V)$, where the latter is endowed with the Day convolution symmetric monoidal structure.

First we establish a criterion for when a map between contexts of natural operations induces a comparison map between the $\infty$-categories of natural operations.
\begin{lemma}\label{lem:compareoperations}
Suppose that we have a map of spans of $\infty$-categories
\[
\begin{tikzcd}[row sep=10pt]
\cE\ar[d,"e"]&\cF\ar[l]\ar[r]\ar[d,"f"]&\cG\ar[d,"g"]\\
\cE'&\cF'\ar[r]\ar[l]&\cG',
\end{tikzcd}
\]
and that the induced functor $\cF\times_{\cG}\cG_{/X}\to\cF'\times_{\cG'}\cG'_{/g(X)}$ is final for any $X\in\cG$.
Then there is a commutative diagram as follows:
\[
\begin{tikzcd}[row sep=10pt]
\Nat_{\cF}[\cE,\cG;\V]\ar[r,phantom,"\subseteq"]\ar[d,"{\Nat[e]}"]&\Fun^L(\Fun(\cE,\V),\V)\ar[r,phantom,"\simeq"]\ar[d,"-\circ e^*"]&\Psh(\cE;\V)\ar[d,"e_!"]\\
\Nat_{\cF'}[\cE',\cG';\V]\ar[r,phantom,"\subseteq"]&\Fun^L(\Fun(\cE',\V),\V)\ar[r,phantom,"\simeq"]&\Psh(\cE';\V).
 \end{tikzcd}
 \]
\end{lemma}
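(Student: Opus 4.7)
The plan is to verify the right square first, then check that the middle vertical $-\circ e^*$ restricts to a functor between the subcategories of natural operations; this will simultaneously define $\Nat[e]$ and give commutativity of the left square.

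For the right square, recall that the equivalence $\Psh(\cE;\V)\simeq\Fun^{L,\V}(\Fun(\cE,\V),\V)$ sends a presheaf $p$ to the coend pairing $F\mapsto F\otimes_\cE p$. Under this identification, precomposition with the $\V$-linear colimit-preserving functor $e^*$ corresponds to left Kan extension $e_!$, as witnessed by the co-Yoneda lemma: for $p\in\Psh(\cE;\V)$ and $F\in\Fun(\cE',\V)$,
\[
(F\circ e)\otimes_\cE p\simeq\int^{c\in\cE}F(e(c))\otimes p(c)\simeq\int^{c'\in\cE'}F(c')\otimes(e_!p)(c')\simeq F\otimes_{\cE'}e_!p.
\]
Conceptually, this is just the fact that the adjunction $(e_!,e^*)$ between presheaf categories is reflected by the $\V$-linear colimit-preserving functor categories under the displayed equivalences.

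For the left square, the key point is to establish a natural equivalence $I^\cE_X\circ e^*\simeq I^{\cE'}_{g(X)}$ of functors $\Fun(\cE',\V)\to\V$ for every $X\in\cG$. Let $\alpha\colon\cF\to\cE$ and $\alpha'\colon\cF'\to\cE'$ denote the left legs of the two spans, so that $e\circ\alpha\simeq\alpha'\circ f$, and let $h\colon\cF\times_\cG\cG_{/X}\to\cF'\times_{\cG'}\cG'_{/g(X)}$ be the canonical functor, which is final by hypothesis. Unwinding definitions and using $e\alpha\simeq\alpha'f$, we compute for $F\in\Fun(\cE',\V)$ that
\[
I^\cE_X\circ e^*(F)\simeq\colim_{\cF\times_\cG\cG_{/X}}\bigl(F\circ\alpha'\circ f\bigr)\xrightarrow{\simeq}\colim_{\cF'\times_{\cG'}\cG'_{/g(X)}}(F\circ\alpha')=I^{\cE'}_{g(X)}(F),
\]
where the first expression rewrites $F\circ e\circ\alpha$ as $F\circ\alpha'\circ f$, and the indicated equivalence is precisely the finality of $h$.

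Consequently, the functor $-\circ e^*$ (equivalently $e_!$, by the right square) sends objects of the full $\infty$-subcategory $\Nat_\cF[\cE,\cG;\V]$ into $\Nat_{\cF'}[\cE',\cG';\V]$; we define $\Nat[e]$ as the induced restriction, at which point the commutativity of the entire diagram is tautological. The only nontrivial step is the one where finality is used: without the hypothesis on $h$, the middle comparison in the displayed string is only a canonical map, so the finality assumption is both indispensable and precisely the conceptual content of the lemma.
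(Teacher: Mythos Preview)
Your proof is correct and follows essentially the same approach as the paper: the core step in both is the identification $I^{\cE}_X\circ e^*\simeq I^{\cE'}_{g(X)}$ via finality of the induced functor between slice categories, from which the restriction of $-\circ e^*$ to the natural-operation subcategories follows. The only difference is that you spell out the right square (identifying $-\circ e^*$ with $e_!$ under the presheaf equivalence) via a coend computation, whereas the paper takes this as understood.
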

\begin{proof}
For any $X\in\cG$, consider the following diagram:
\[
\begin{tikzcd}[row sep=10pt]
\Fun(\cE',\V)\ar[d,"e^*"]\ar[r]&\Fun(\cF'\times_{\cG'}\cG'_{/g(X)},\V)\ar[d, ]\ar[dr]\\
\Fun(\cE,\V)\ar[r]&\Fun(\cF\times_{\cG}\cG_{/X},\V)\ar[r]&\V
\end{tikzcd}.
\]
In the right triangle, the two arrows to $\V$ are taking colimits over the slice categories, so the right triangle commutes by the finality assumption. The left square is commutative by construction. 
Note that the composites along the top and bottom are respectively $I^{\cE'}_{g(X)}$ and $I^{\cE}_X$, so we obtain an equivalence $I^{\cE'}_{g(X)}\simeq I^{\cE}_X\circ e^*$, which shows that the restriction of $-\circ e^*$ to $\Nat_\cF[\cE,\cG;\V]$ factors through $\Nat_{\cF'}[\cE',\cG';\V]$ as desired.
\end{proof}
In order to show that the map of contexts for natural operations from Example \ref{ex:contexts} satisfies the assumption in Lemma \ref{lem:compareoperations}, we will need the 
following well-known result. We were unable to find a precise reference in the literature, so we provide a proof here.
In particular, this implies that the factorization homology of an $\Ei$-algebra $\cA$ over $S^1$ agrees with its Hochschild homology, where we consider $\cA$ as an $\E_1$-algebra \cite[IV.2.2]{NS2018}\cite[Proposition 5.7]{ayala2020handbook}.
\begin{lemma} \label{lem:final Disk/S^1->Fin}
For any compact 1-manifold $M$, the functor 
\[
\Disk_{/M}:=\Disk_1\times_{\Mfld_1}(\Mfld_1)_{/M}\to \Fin_{/M}
\]
induced by taking path components of disks and regarding 1-manifolds as spaces is final.
    
\end{lemma}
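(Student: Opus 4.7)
The plan is to apply the $\infty$-categorical version of Quillen's Theorem~A (the standard cofinality criterion): the functor is final if and only if for every $(A\to M)\in\Fin_{/M}$ the slice $\infty$-category
\[
\pi_{(A\to M)/}:=\Disk_{/M}\times_{\Fin_{/M}}(\Fin_{/M})_{(A\to M)/}
\]
is weakly contractible. First I would unpack the slice geometrically: an object is a triple consisting of an oriented disk embedding $\iota\colon U\hookrightarrow M$, a map $f\colon A\to\pi_0(U)$ of finite sets, and a homotopy in $M$ between the reference map $A\to M$ and the composite $A\xrightarrow{f}\pi_0(U)\xrightarrow{\iota}M$; morphisms are oriented embeddings $U\hookrightarrow V$ of disk configurations over $M$, compatible with $f$ and with the homotopy data.

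Next I would identify a weakly contractible final subcategory of $\pi_{(A\to M)/}$. The key geometric input is the classical fact that the space of oriented embeddings of a disk $D^1$ into $M$ whose image contains a prescribed point of $M$ is contractible, by straight-line isotopy in a chart of the 1-manifold. Combining this with the homotopy data $h$, which allows one to move the images of $A$ along paths in $M$, I would construct a ``disk-shrinking and re-centring'' deformation retraction of $\pi_{(A\to M)/}$ onto the subcategory of ``minimally thickened'' configurations in which the disks are arbitrarily small and centred around the images of $A$, with $f$ the tautological surjection and $h$ constant. This subcategory is filtered under inclusion of thickenings and hence weakly contractible.

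The main obstacle is making the retraction $\infty$-categorically rigorous. The delicate point is coherently handling clusters of coincident images of $A$ in $M$, using the homotopies $h$ to separate them into disjoint small disks while simultaneously updating the bookkeeping map $f$. A cleaner parallel route I would pursue is to first reduce to $M$ connected via the disjoint-union decomposition of $\Mfld_1$: both $\Fin_{/M}$ and $\Disk_{/M}$ split as products over the connected components of $M$, so the slice decomposes as a product and it suffices to treat $M$ connected, which means $M\simeq[0,1]$ or $M\simeq S^1$ by the classification of compact oriented 1-manifolds. The case $M\simeq[0,1]$ can be handled directly, as the slice is convex over an ordered configuration space of $|A|$ points in an interval, hence contractible. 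The case $M\simeq S^1$ then follows by descent along an open cover by intervals, together with a Čech/Segal-style colimit argument. Closely related cofinality statements, treating essentially the same phenomenon, appear in \cite[IV.2.2]{NS2018} and \cite[Proposition 5.7]{ayala2020handbook}, whose technical strategies I would borrow where possible.
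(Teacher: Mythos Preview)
Your overall scaffolding---Quillen's Theorem~A and reduction to connected $M$---matches the paper, but the treatment of the connected cases has genuine gaps. For $M\simeq D^1$ the claim that the slice is ``convex over an ordered configuration space of $|A|$ points'' is too vague to be an argument; the paper instead simply observes that both $\Disk_{/D^1}$ and $\Fin_{/D^1}$ have terminal objects (the identity $D^1\to D^1$ and $*\to D^1$, respectively) and that the functor preserves them, so every slice under an object of $\Fin_{/D^1}$ has a terminal object. For $M\simeq S^1$ neither of your proposed routes is a proof. The ``disk-shrinking and re-centring'' retraction runs into exactly the coherence problem you flag, and you offer no mechanism to resolve it. The ``descent along an open cover'' idea is more seriously problematic: finality of a functor is not a local property one can verify on pieces of a cover and then glue, so it is unclear what statement you would prove on the intervals and how it would assemble; the references you cite compute $\int_{S^1}\cA$ as Hochschild homology but do not directly deliver this finality statement.

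The paper's argument for $S^1$ is entirely different and combinatorial. It identifies $\Fin_{/S^1}$ with the category of finite free $\Z$-sets and $\Disk_{/S^1}$ with the paracyclic category $\Lambda_\infty$ of totally ordered finite free $\Z$-sets, and then verifies contractibility of each slice $\C_A$ by (i) retracting onto the full subcategory $\C'_A$ of surjections via the right adjoint ``take the image'', and (ii) inducting on the number of $\Z$-orbits of $A$, exhibiting for $A'=A\sqcup\Z$ a cocartesian fibration $\C'_{A'}\to\C'_A$ whose fibres are posets of one- or two-element subintervals of a finite total order and hence contractible. This inductive combinatorial structure is the key idea missing from your proposal.
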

\begin{proof}
Since the canonical maps $\Disk_{/M}\times \Disk_{/N}\to\Disk_{/M\sqcup N}$ and $\Fin_{/M}\times \Fin_{/N}\to\Fin_{/M\sqcup N}$ are equivalences for all compact 1-manifolds $M$ and $N$, it suffices to prove the lemma in the two cases $M=D^1$ and $M=S^1$. For $M=D^1$, this is clear since both the source and the target have a terminal object given by $D^1$. For $M=S^1$,
we identify $\Fin_{/S^1}$ with the 1-category having the following objects and morphisms:
\begin{itemize}
\item An object is a set $A$ with free $\Z$-action admitting finitely many orbits;
\item A morphism is a $\Z$-equivariant map.
\end{itemize}
Similarly, $\Disk_{/S^1}$ is equivalent to the paracyclic category $\Lambda_\infty$ \cite[Lemma 5.13 and 5.18]{BSZ}, which can be described as the following 1-category:
\begin{itemize}
\item  Objects are totally ordered sets $B$ with free $\Z$-action admitting finitely many orbits, such that the map $1.-\colon B\to B$ is order-preserving and satisfies $1.b>b$ for all $b\in B$;
\item A morphism is a $\Z$-equivariant and weakly increasing map.
\end{itemize}
To apply Quillen Theorem A, we want to show that for any object $A\in\Fin_{/S^1}$, regarded as a $\Z$-set as above, and the 
slice category $\C_A:=\Disk_{/S^1}\times_{\Fin_{/S^1}}(\Fin_{/S^1})_{A/}$ has contractible classifying space. We represent an object in $\C_A$ as a map of $\Z$-sets $A\to B$, where $B$ is endowed with a total order as above. Let $\C'_A$ denote the full subcategory of $\C_A$ spanned by surjective maps $A\to B$. Then the inclusion $\C'_A\hookrightarrow\C_A$ admits a right adjoint $R_A$, sending $A\to B$ to $A\twoheadrightarrow\Imm(A\to B)$, where the image $\Imm(A\to B)\subseteq B$ is endowed with the restricted total order. In particular $|\C_A|\simeq|\C'_A|$. We also observe that morphisms in $\C'_A$ are represented by \emph{surjective} $\Z$-equivariant and weakly increasing maps $B\twoheadrightarrow B'$ under $A$. 

We proceed by induction on the number of $\Z$-orbits of $A$. For $A=\emptyset$ we observe that $\C_\emptyset:=\Disk_{/S^1}\times_{\Fin_{/S^1}}(\Fin_{/S^1})_{\emptyset/}\simeq\Disk_{/S^1}$ has an initial object $\emptyset$, so $\C_\emptyset\simeq\C'_\emptyset\simeq*$ as categories. 

We next suppose that $|\C'_A|\simeq *$ for a given $A$, and we consider the $\Z$-set $A':=A\sqcup\Z$. The composite functor 
\[
\psi\colon\C'_{A'}\hookrightarrow\C_{A'}\xrightarrow{A\hookrightarrow A'}\C_A\xrightarrow{R_A}\C'_A
\]
is a cocartesian fibration. The fibre $\psi^{-1}(A\twoheadrightarrow B)$ of $\psi$ over an object $A\twoheadrightarrow B$ in $\C'_A$ is the poset described by the following objects and morphisms:
\begin{itemize}
    \item there are two types of objects:
    \begin{enumerate}
        \item the first type is given by surjections $A'=A\sqcup\Z\twoheadrightarrow B$ extending the given surjection $A\twoheadrightarrow B$;
        \item the second type is given by surjections $A'=A\sqcup\Z\twoheadrightarrow B\sqcup \Z$ given by the coproduct of the given  surjection $A\twoheadrightarrow B$ and the identity of $\Z$; these objects also carry the information of an extension to $B\sqcup\Z$ of the total order on $B$, satisfying the above requirements.
    \end{enumerate}
\item a morphism can only go from an object of type (2) to an object of type (1), and it is the unique map under $A'$, if it exists.
\end{itemize}
The poset $\psi^{-1}(A\twoheadrightarrow B)$ is readily identified with the poset of subintervals of the totally ordered set $B$ of one or two elements, and the partial ordering is giving by inclusion; in particular we have $|\psi^{-1}(A\twoheadrightarrow B)|\simeq*$. Using the inductive hypothesis $|\C'_A|\simeq*$, we conclude that also $|\C'_{A'}|\simeq*$.
\end{proof}
Now we are ready to prove Theorem \ref{thm:C}, which we state again using a larger commutative diagram.

\begin{theorem}\label{thm:formaloperations}

There is a commutative square of symmetric monoidal $\infty$-categories, with the left horizontal arrows being equivalences:
\[
\begin{tikzcd}[row sep=10pt]
\OC\ar[d,"\epsilon"]\ar[r,"\simeq"]&\Nat_{\Disk_1}[\cO,\Mfld_1] \ar[r,phantom,"\subseteq"]\ar[d,"{\Nat[\epsilon]}"]&\Psh(\cO)\ar[d,"\epsilon_!"]\\
\GrCob\ar[r,"\simeq"]&\Nat_{\Fin}[\Gr,\Ss] \ar[r,phantom,"\subseteq"]&\Psh(\Gr).
\end{tikzcd}
\]
\end{theorem}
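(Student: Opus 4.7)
My plan is to translate the four corners of the desired square into full $\infty$-subcategories of the presheaf categories $\Psh(\cO)$ and $\Psh(\Gr)$, where every map becomes amenable to direct calculation. First I identify the bottom row. Under the canonical equivalence $\Fun^L(\Fun(\Gr,\Ss),\Ss)\simeq\Psh(\Gr)$, which sends a colimit-preserving $L$ to the presheaf $B\mapsto L(\Gr(B,-))$, the functor $I^\Gr_X$ corresponds to
\[
B\mapsto\colim_{A\in\Fin_{/X}}\Gr(B,A)=\Bigl(\int_X\yo_\Gr\Bigr)(B).
\]
Theorem \ref{thm:B} then identifies the essential image of $\Nat_\Fin[\Gr,\Ss]\subseteq\Psh(\Gr)$ with that of the fully faithful symmetric monoidal functor $\yo^\circ_\Gr=\iota_!\yo_\Gr\colon\GrCob\hookrightarrow\Psh(\Gr)$. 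This both endows $\Nat_\Fin[\Gr,\Ss]$ with a symmetric monoidal structure inherited from Day convolution and yields a symmetric monoidal equivalence $\GrCob\simeq\Nat_\Fin[\Gr,\Ss]$. The analogous identification of the top row is established in \cite{BSZ} by the same strategy, and in particular supplies the formula $\yo^\circ_\cO(M)\simeq\colim_{L\in\Disk_{/M}}\yo_\cO(L)$ for every $M\in\OC$.

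Next I obtain the right square of the big diagram by invoking Lemma \ref{lem:compareoperations} on the map of spans $(\cO,\Disk_1,\Mfld_1)\to(\Gr,\Fin,\Ss)$ extracted from the cube (\ref{sq:cube}). The sole hypothesis to verify, namely that the comparison functor $\Disk_{/M}\to\Fin_{/M}$ is final for every compact oriented $1$-manifold $M$, is exactly the content of Lemma \ref{lem:final Disk/S^1->Fin}.

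It remains to check that the left square commutes. Since both horizontal arrows in the outer rectangle are fully faithful, it suffices to establish a natural equivalence $\epsilon_!\yo^\circ_\cO\simeq\yo^\circ_\Gr\circ\epsilon$ as functors $\OC\to\Psh(\Gr)$. For $M\in\OC$, cocontinuity of $\epsilon_!$ together with its action on representables yields
\[
\epsilon_!\yo^\circ_\cO(M)\simeq\colim_{L\in\Disk_{/M}}\yo_\Gr(\epsilon(L))\simeq\colim_{A\in\Fin_{/\epsilon(M)}}\yo_\Gr(A)=\yo^\circ_\Gr(\epsilon(M)),
\]
where the middle equivalence uses finality from Lemma \ref{lem:final Disk/S^1->Fin} once more. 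The main obstacle I expect is not any individual computation, but rather upgrading the entire square to symmetric monoidal functoriality, so that it genuinely lives in $\CAlg(\cat)$. I will handle this by appealing to the universal property of Day convolution \cite[Corollary 4.8.1.12]{HA}, under which both Yoneda embeddings and the left Kan extension $\epsilon_!$ are essentially uniquely symmetric monoidal, thereby promoting the pointwise equivalences above to the symmetric monoidal level.
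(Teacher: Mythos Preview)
Your proposal is correct and follows essentially the same approach as the paper: both use Theorem~\ref{thm:B} and the cited result from \cite{BSZ} for the horizontal equivalences, invoke Lemma~\ref{lem:compareoperations} together with Lemma~\ref{lem:final Disk/S^1->Fin} for the right square, and then deduce commutativity of the left square from that of the outer rectangle. Your verification of the outer rectangle via the explicit finality computation is slightly more detailed than the paper's one-line appeal to ``naturality of the (restricted) Yoneda embedding,'' and your attention to the symmetric monoidal upgrade is more explicit, but the underlying argument is the same.
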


\begin{proof}
The middle vertical map is well-defined by Lemma \ref{lem:compareoperations} and Lemma \ref{lem:final Disk/S^1->Fin}, so that the right square automatically commutes.
The top left horizontal arrows is an equivalence by \cite[Corollary 3.6]{BSZ}, where $M\in\OC$ and $I^{\cO}_M$ both correspond to the presheaf $\int_M \yo_{\cO}(\iota)$. The bottom left horizontal arrow is an equivalence by Theorem \ref{thm:B}: $\GrCob$ and $\Nat_{\Fin}[\Gr,\Ss]$ are both identified as full subcategories of $\Psh(\Gr)$ spanned by the same objects and hence equivalent, with $X\in\GrCob$ and $I^{\Gr}_X$ both corresponding to the presheaf $\int_X \yo_{\Gr}(\iota)$. 

The commutativity of the left square follows from the commutativity of the outer square, which follows from the naturality of the (restricted) Yoneda embedding.
\end{proof}

\begin{remark}
Let $\V$ be an arbitrary presentably symmetric monoidal $\infty$-category. On the one hand, one can use the essentially unique morphism $\Ss\to\V$ of presentably symmetric monoidal $\infty$-categories to transform $\epsilon\colon\OC\to\GrCob$, regarded as a morphism of $\Ss$-enriched symmetric monoidal $\infty$-categories, into a morphism $\epsilon^\V\colon\OC^\V\to\GrCob^\V$ of $\V$-enriched symmetric monoidal $\infty$-categories. On the other hand $\epsilon_!\colon\Psh(\cO;\V)\to\Psh(\Gr;\V)$ may be regarded as a morphism of $\V$-enriched symmetric monoidal $\infty$-categories, restricting to a $\V$-enriched symmetric monoidal functor 
\[
\Nat[\epsilon]^\V\colon\Nat_{\Disk_1}[\cO,\Mfld_1;\V]\to\Nat_\Fin[\Gr,\Ss;\V].
\]
The analogue to Theorem C identifies the $\V$-enriched symmetric monoidal functors $\epsilon^\V$ and $\Nat[\epsilon]^\V$, along with their sources and targets. The proof is identical and hence we omit the details.
\end{remark}

\bibliographystyle{alpha}
\bibliography{bib}

\end{document}